\numberwithin{equation}{section}
\numberwithin{figure}{section}
\begin{document}
\newtheorem{theorem}{Theorem}[section] \newtheorem{lemma}[theorem]{Lemma}
\newtheorem{definition}[theorem]{Definition} \newtheorem{example}[theorem]{Example}
\newtheorem{proposition}[theorem]{Proposition} \newtheorem{corollary}[theorem]{Corollary}
\newtheorem{conjecture}[theorem]{Conjecture} 
\newtheorem{remark}{Remark} 
\newtheorem{main}{Main Theorem}
\global\long\def\wta{{\rm {wt} } a }
 \global\long\def\R{\frak{R}}
\global\long\def\<{\langle}
\newcommand{\bea}{\begin{eqnarray}}
\newcommand{\eea}{\end{eqnarray}}
 \global\long\def\LL{\mathcal{L}}
 \global\long\def\>{\rangle}
 \global\long\def\t{\tau}
 \global\long\def\a{\alpha}
 \global\long\def\e{\epsilon}
 \global\long\def\l{\lambda}
 \global\long\def\ga{\gamma}
 \global\long\def\L{ L}
 \global\long\def\b{\beta}
 \global\long\def\om{\omega}
 \global\long\def\o{\omega}
 \global\long\def\c{\chi}
 \global\long\def\ch{\chi}
 \global\long\def\cg{\chi_{g}}
 \global\long\def\ag{\alpha_{g}}
 \global\long\def\ah{\alpha_{h}}
 \global\long\def\ph{\psi_{h}}
 \global\long\def\gi{\gamma}
 \global\long\def\supp{{\rm {supp}}}
 \global\long\def\GG{\mathcal{G}}
 \global\long\def\NN{\mathcal{N}}
 \global\long\def\wtb{{\rm {wt} } b }
 \global\long\def\bea{
\begin{eqnarray}
\end{eqnarray}
 }
 \global\long\def\eea{{eqnarray}}
 \global\long\def\be{
\begin{equation}
\end{equation}
 }
 \global\long\def\ee{{equation}}
 \global\long\def\g{\frak{g}}
 \global\long\def\tg{\tilde{\frak{g}} }
 \global\long\def\hg{\hat{\frak{g}} }
 \global\long\def\hb{\hat{\frak{b}} }
 \global\long\def\hn{\hat{\frak{n}} }
 \global\long\def\h{\frak{h}}
 \global\long\def\wt{{\rm {wt} } }
 \global\long\def\V{{\cal V}}
 \global\long\def\hh{\hat{\frak{h}} }
 \global\long\def\n{\frak{n}}
 \global\long\def\Z{\mathbb{Z}}
 \global\long\def\lar{\longrightarrow}
 \global\long\def\X{\mathfrak{X}}

\global\long\def\Zp{{\Bbb Z}_{\ge0} }

\global\long\def\N{\mathbb{N}}
 \global\long\def\C{\mathbb{C}}
 \global\long\def\Q{\Bbb Q}
 \global\long\def\WW{\boldsymbol{\mathcal{W}}}
\global\long\def\gl{\mathfrak{gl}}
 \global\long\def\sl{\mathfrak{sl}}
 \global\long\def\la{\langle}
 \global\long\def\ra{\rangle}
 \global\long\def\bb{\mathfrak{b}}
 \global\long\def\triplet{\mathcal{W}(p)}
 \global\long\def\striplet{\mathcal{SW}(m)}

\global\long\def\nordplus{\mbox{\scriptsize\ensuremath{{+\atop +}}}}
 \global\long\def\ds{{\displaystyle }}
\global\long\def\halmos{\rule{1ex}{1.4ex}}
 \global\long\def\pfbox{\hspace*{\fill}\mbox{\ensuremath{\halmos}}}
\global\long\def\epfv{\hspace*{\fill}\mbox{\ensuremath{\halmos}}}
 \global\long\def\nb{{\mbox{\tiny{ \ensuremath{{\bullet\atop \bullet}}}}}}
\global\long\def\nn{\nonumber}

\global\long\def\HH{\widetilde{\mathcal{T}}}
 \global\long\def\vak{{\bf 1}}

\global\long\def\sect#1{\section{#1}\setcounter{equation}{0}\setcounter{rema}{0}}
 \global\long\def\ssect#1{\subsection{#1}}
 \global\long\def\sssect#1{\subsubsection{#1}}


\title[]{ On  irreducibility of  modules of Whittaker type for cyclic orbifold vertex algebras }

\author{Dra\v{z}en Adamovi\'c, Ching Hung Lam,  Veronika Pedi\' c and Nina Yu}

\begin{abstract}
We extend the Dong-Mason theorem on irreducibility of modules for   orbifold vertex algebras  (cf.
\cite{DM})  to the category of weak modules.
Let $V$ be a vertex operator algebra,  $g$ an automorphism of order $p$. Let
$W$ be an irreducible weak $V$--module such that
$W,W\circ g,\dots,W\circ g^{p-1}$ are inequivalent irreducible modules.
We prove that $W$ is an   irreducible  weak $V^{\left\langle g\right\rangle }$\textendash module.
This result can be applied  on irreducible modules of  certain Lie algebra $\mathfrak L$ such that
$W,W\circ g,\dots,W\circ g^{p-1}$ are Whittaker modules having different Whittaker functions. We
present certain applications in the cases of the Heisenberg and Weyl vertex operator algebras. 
\end{abstract}


\maketitle

\section{Introduction} The study of quantum Galois theory for vertex operator algebras has been
initiated by C. Dong and G. Mason in \cite{DM}, and this theory has many applications in the vertex
operator algebra theory. Let us discuss one such application. Let $g$ be an automorphism of finite
order, and $V^{\left\langle g\right\rangle }$ be its subalgebra of fixed points under $g$. Let $M$ be a module for $V$. In
\cite{DM}, the authors discuss the structure of $M$ as a $V^{\left\langle g\right\rangle }$--module. In particular, they prove
that if $M$ is an irreducible ordinary $V$--module such that it is not isomorphic to $M\circ
g^{i}$, for all $i$,  then $M$ is also an irreducible module for the subalgebra $V^{\left\langle g\right\rangle }$.  This result
is important for the construction of irreducible modules for vertex algebras $M(1) ^+$ and $V_L^+$
(cf.\cite{D1,D2,DN1,DN2,A1,DN3,A2,AD,ADL,DJL}). Recently, it has also been applied for the
realization of irreducible modules for subalgebras of the triplet vertex algebra $\mathcal W(p)$
(cf. \cite{ALM}). The proof in \cite{DM} is based on certain applications of Zhu's algebra theory.

The question then arises as to whether the statement of Dong-Mason theorem is true for any weak
$V$--module.  It is clear that the original Dong--Mason proof cannot be applied to arbitrary weak
modules, since we cannot apply Zhu's algebra theory.  In this paper, we present an extension of
Dong-Mason theorem for weak modules of Whittaker type.

Recently, Whittaker modules have been investigated in the framework of vertex operator algebra
theory  in \cite{ALZ,HY,T1}. The latter two present a Lie theoretic proof of irreducibility for
certain Whittaker modules for the fixed point subalgebra of Heisenberg vertex algebra. Further
motivation for this work is to give a vertex algebraic proof of these statements, which can be
applied more generally.

 In  this paper,  we emphasise a new method for proving the irreducibility of orbifold modules    and the
role of Whittaker modules in the proof. In order to avoid  technical details, we study only
non-twisted modules and basic examples of Heisenberg and Weyl vertex algebras. In our forthcoming
papers, we shall study twisted modules and more   examples  of Whittaker modules for $\mathcal
W$--algebras.

\subsection{Irreducibility of orbifolds.}   

{ Let $W$ be an irreducible weak $V$-module and $g$ an
automorphism of $V$ with order $p$. Let $Y_{W}\left(v,z\right)$
be the vertex operator of $v\in V$ operating on $W$. Recall that
$W\circ g$ is defined in \cite{DM} to be the space $W$ with the vertex
operator given by
\[
Y_{ W\circ g}\left(v,z\right)=Y_{W}\left(gv,z\right),\forall v\in V.
\]
 It is clear that $W\circ g$ is also a $V$-module. }
 The following is our first main result  (see Theorem \ref{general} for part (1) and Theorem \ref{reducibility} for part (2)).
 \begin{main} \label{general-intr}    Let $W$ be  an irreducible   weak  $V$--module and $g$ an
automorphism of finite order.
\begin{itemize}
 \item[(1)] Assume that $W \circ g^i \ncong W$ for all $i$. Then $W$ is an
irreducible $V^{\langle g \rangle}$\textendash module.
\item[(2)] Assume that  $W \cong  W \circ g$. Then $W$ is a direct sum  of $p$    irreducible $V^{\langle g \rangle}$--modules.
\end{itemize}
 \end{main}
Let us explain the main new ideas of our proof.  For (1), we construct a graded module $$\mathcal M =
\bigoplus_{i=0} ^{p-1}   W\circ g^i = \bigoplus_{i=0} ^{p-1} \Delta ^{p,i}  (W),$$ compatible with
the action of the automorphism $g$, such that each component is isomorphic to $W$ as $V^{\left\langle g\right\rangle }$--module.
Then we take any non-trivial submodule $S$ of $W$ and identify it with  a  submodule of
$\Delta^{p,0} (W)$. It is then sufficient to prove the following claim:

\begin{enumerate}
\item[(1.1)]   For each $w \ne 0$,  a vector of the form $(w, \dotsc, w) \in \mathcal M$ is cyclic
in $\mathcal M$.
\end{enumerate}
The advantages of our approach are the fact that we do not need Zhu's algebra and the fact that this approach can be applied for non-weight modules.
In Lemma \ref{1.4}, we  prove relation (1.1) for arbitrary weak module by using the Lie algebra $\mathfrak g(V)$ associated to $V$ and its universal enveloping associative algebra. It turns out that (1.1) is just a consequence of a similar statement for associative algebras (cf. Lemma \ref{1.2}).

For proof of the part (2) (cf. Theorem \ref{reducibility}),  we  slightly  modify the methods  of \cite{DM} and \cite{DY}  by applying a general version of Schur's Lemma on the   action of the group $G= {\Z}_p$ on $W$. 

\subsection{Role of the Whittaker modules in the paper}
Although Theorem \ref{general-intr} holds for arbitrary weak $V$--modules, it is not easy to construct examples of modules satisfying the conditions of the theorem.
 It turns out that these conditions can be checked  for a large class of Whittaker modules for certain infinite-dimensional Lie algebras. We use concepts of Whittaker categories which appear in the paper \cite{BM} (see also \cite{MZ}). Since any weak module for a vertex algebra is automatically a module for an infinite-dimensional Lie algebra, such an approach gives a framework for studying many examples. We just need to assume that each module $W\circ g^{i}$ belongs to a different Whittaker block. This means that each module $W \circ g^i $ has a different Whittaker function.
The following is our  second main result (see Theorem \ref{general-whittaker}) which gives most new applications of our construction.

\begin{main} \label{general-intr-whittaker}
Let $W$ be an irreducible weak $V$--module such that all $W_{i}=   W\circ g^{i}$
are Whittaker modules whose Whittaker functions $\lambda^{(i)}=\mathfrak{n}\rightarrow{\C}$ are mutually distinct.
Then $W$ is an irreducible weak  $V^{\left\langle g\right\rangle }$--module.
\end{main}


\subsection{Examples}

We construct a family of Whittaker modules for Heisenberg and Weyl vertex algebra, and apply our new result to prove irreducibility of orbifold subalgebras. In particular, we show that in these cases, standard (= universal) Whittaker modules are irreducible.

In the case of Heisenberg vertex algebra, we use the new method and present an alternative proof of the ${\Z}_2$--orbifolds of Heisenberg vertex algebra \cite{HY}.  

In the case of Weyl vertex algebra $M$, we construct a family of Whittaker modules $M_1 (\bm{\lambda}, \bm{\mu})$ where  $ (\bm{\lambda}, \bm{\mu}) \in {\C} ^n \times {\C} ^n$. We prove:

\begin{main}[see Theorem \ref{weyl-irreducible}] Assume that $\Lambda = (\bm{\lambda}, \bm{\mu})  \ne 0$. Then $M_1(\bm{\lambda}, \bm{\mu})  $ is an irreducible weak  module for the orbifold subalgebra $M^{\Z_p}$, for each $ p \ge 1$.
\end{main}

\vskip 5mm

 {\bf Acknowledgments:}

D. A. wants to thank V. Mazorchuk for  valuable discussions.
This work was done in part during the authors'  stay at the Research Institute for Mathematical
Sciences (RIMS), Kyoto. We would like to thank the organizers of the program `` VOAs and
symmetries '' for the invitation and excellent research conditions. D.A.  and V. P. are  partially
supported   by the
QuantiXLie Centre of Excellence, a project coffinanced
by the Croatian Government and European Union
through the European Regional Development Fund - the
Competitiveness and Cohesion Operational Programme
(KK.01.1.1.01.0004). C.L. is partially supported by a research grant AS-IA-107-M02 of Academia Sinica
and MoST grant  107-2115-M-001 -003 -MY3  of Taiwan.

\section{Preliminaries}

\begin{definition}
A vertex operator algebra $(V, Y, \mathbf{1}, \omega)$ is a $\mathbb{Z}$-graded vector space ${V = \amalg_{n \in \mathbb{Z}}V_{(n)}}$   such that
  $\mbox{wt}(v) = n$ for  $v \in V_{(n)}$,
\[\dim V_{(n)}< \infty,\ \text{for}\ n \in \mathbb{Z},\]
\[\text{and } V_{(n)} = 0\ \text{for $n$ sufficiently small},\]
equipped with a linear map $V \otimes V \to V[[z, z^{-1}]]$, or equivalently,
\[
\begin{split}
V & \to (End V)[[z, z^{-1}]] \\
v & \mapsto Y(v, z) = \sum_{n\in\mathbb{Z}}v_nz^{-n-1}\ (\text{where }v_n \in End V),
\end{split}
\]
$Y(v,z)$ denoting the vertex operator associated with v, and equipped also with two distinguished homogenous vectors $\mathbf{1}\in V_{(0)}$ (the vacuum) and $\omega \in V_{(2)}$.
The following conditions are assumed for $u,v \in V$:

\begin{itemize}
	\item $u_nv = 0$ for $n$ sufficiently large (the lower truncation condition),
	
	\item $Y(\mathbf{1},z) = \mbox{Id}$,
	
	\item $Y(v,z) \mathbf{1} \in V[[z]]$ and $\lim_{z \to 0} Y(v,z)\mathbf{1} = v$ (creation property),
\end{itemize}
and the Jacobi identity holds
 \[z_0^{-1}\delta\left(\frac{z_1-z_2}{z_0}\right)Y(u, z_1)Y(v, z_2) - z_0^{-1}\delta\left(\frac{z_2-z_1}{-z_0}\right)Y(v, z_2)Y(u, z_1)\]
 \[= z_2^{-1}\delta\left(\frac{z_1-z_0}{z_2}\right)Y(Y(u, z_0)v, z_2).\]

Also, the Virasoro algebra relations hold (acting on $V$):
\[[L(m), L(n)] = (m-n)L(m+n) + \frac{1}{12}(m^3-m)\delta_{n+m,0}(\mbox{rk}  \  V)\mathbf{1},\]
for $m, n \in \mathbb{Z}$, where
\[L(n) = \omega_{n+1} \text{ for } n\in \mathbb{Z} \text{ i.e., } Y(\omega, z) = \sum_{n \in \mathbb{Z}} L(n)z^{-n-2}\]
and
\[\text{rk } V \in \mathbb{C},\]
\[L(0)v = nv = (\text{wt }v)v \text{ for } n \in \mathbb{Z} \text{ and } v \in V_{(n)},\]
\[\frac{d}{dz}Y(v,z) = Y(L(-1)v,z).\]

\end{definition}

We say that $g \in Aut_\C (V)$ is an automorphism of a vertex operator algebra $V$ if
\begin{itemize}
\item $g (a_n b) = g(a) _n g(b) $ for all $a, b \in V$, $n \in {\Z}$.

\item $g(\omega) = \omega$.

\end{itemize}

For any group  $G$ of   automorphisms of $V$,  we have the orbifold vertex algebra $ V^{G} = \{ v \in V
\ \vert \ g(v) = v,  \ g \in G\}$, which   is a vertex subalgebra of $V$.
If  $G =\langle  g \rangle$ is cyclic, we write $V^{\left\langle g\right\rangle }$ for $V^G$.


We shall now recall the notions of weak modules and ordinary modules for $V$.

 \begin{definition} A weak $V$--module  is a pair   $(W, Y_W)$ where
$W$ is a complex vector space,  and  $Y_W ( \cdot, z)$ is a linear map
\[ 
\begin{split}
Y_W:\   V &\rightarrow \mbox{End} (W) [[z, z ^{-1}]], \\
a &\mapsto Y_W(a,z) = \sum_{ n \in {\Z}} a_n z ^{-n-1}, 
\end{split}
\]
which satisfies the following conditions for  $a, b \in V$ and $v \in W$:

\begin{itemize}
\item  $a_n v = 0$ for  $n$ sufficiently large.
\item $Y_W({\bf 1},z)=I_W$.

\item The following Jacobi identity holds:
\begin{equation*}
\begin{split}
z_{0}^{-1}\delta\left(\frac{z_{1}-z_{2}}{z_{0}}\right)Y_W(a,z_{1})Y_W(b,z_{2}) -
z_{0}^{-1}\delta\left(\frac{z_{2}-z_{1}}{-z_{0}}\right)
Y_W(b,z_{2})Y_W(a,z_{1})\\
=z_{2}^{-1}\delta\left(\frac{z_{1}-z_{0}}{z_{2}}\right)Y_W(Y(a,z_{0})b,z_{2}).\hspace*{2cm}
\end{split}
\end{equation*}

\end{itemize}
\end{definition}

Let $L(z) = Y(\omega, z) = \sum_{n \in {\Z}} L(n) z ^{-n-2}$. Note that every weak $V$--module is a module for the Virasoro algebra generated by  $L(n)$, $n \in {\Z}$.

\begin{definition} A weak $V$--module $(W, Y_W)$ is called an ordinary $V$--module if the following conditions hold:
\begin{itemize}
\item  The $L(-1)$-derivative property: for any $a \in V$, $$Y_W (L(-1) a, z) = \frac{d}{dz} Y_W(a,z). $$
\item  The grading property: $$W= \oplus_{\alpha  \in {\C} } W(\alpha), \quad W(\alpha) = \{ v \in W \ \vert \  L(0) v = \alpha v\}$$
\item[] such that for every $\alpha$,
   $\dim W(\alpha) <\infty $ and  $W(\alpha + n) = 0$ for sufficiently negative $n \in {\Z}$.
\end{itemize}

\end{definition}

The following result was  proved in \cite{DM}.

\begin{theorem}\cite[Theorem 6.1] {DM} Assume that $(W, Y_W)$ is an    irreducible    {\bf  ordinary} module for the vertex operator algebra $V$. Assume that $g$ is an automorphism of $V$ of prime order $p$ such that $ W\circ g \ncong W$.  Then $W$ is an irreducible module for the orbifold subalgebra
$V^{\left\langle g\right\rangle }$.
\end{theorem}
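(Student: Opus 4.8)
The plan is to follow the strategy that the introduction has already sketched for Main Theorem part (1), specialized to the case of prime order $p$, but using only the vertex-algebraic / Lie-algebraic machinery rather than Zhu's algebra, so that the argument is uniform with the weak-module case treated later. First I would form the auxiliary graded space $\mathcal M = \bigoplus_{i=0}^{p-1} W\circ g^i$, equipped with the diagonal-twisted $V$-action $Y_{\mathcal M}(v,z) = \bigoplus_i Y_W(g^i v, z)$, together with the natural order-$p$ operator $\sigma$ cyclically permuting the summands. One checks that $\sigma$ commutes with the $V^{\langle g\rangle}$-action, that each summand $W\circ g^i$ is isomorphic to $W$ as a $V^{\langle g\rangle}$-module (since $g^i$ fixes $V^{\langle g\rangle}$ pointwise), and that the diagonal $\Delta = \{(w,\dots,w) : w\in W\}$ is precisely the $\sigma$-fixed subspace, hence a $V^{\langle g\rangle}$-submodule of $\mathcal M$ isomorphic to $W$.

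Next I would reduce irreducibility of $W$ as a $V^{\langle g\rangle}$-module to the cyclicity claim (1.1): every nonzero $(w,\dots,w)\in\mathcal M$ generates $\mathcal M$ over $V^{\langle g\rangle}$. Granting this, if $0\neq S\subseteq W$ is a $V^{\langle g\rangle}$-submodule, pick $0\neq w\in S$; the corresponding diagonal vector generates all of $\mathcal M$ over $V^{\langle g\rangle}$, but it already lies in $\Delta\cong W$, which is $V^{\langle g\rangle}$-stable, so $\Delta = V^{\langle g\rangle}\cdot(w,\dots,w)$ would have to equal $\mathcal M$ — contradiction unless $p=1$; the correct conclusion is rather that $V^{\langle g\rangle}\cdot(w,\dots,w) = \mathcal M$ projects onto each coordinate, so in particular onto the zeroth coordinate $\Delta^{p,0}(W)=W$, forcing $S=W$. (I would state this projection argument carefully: the submodule generated by the diagonal vector surjects, under the $i$-th coordinate projection $\mathcal M\to W\circ g^i$, onto the $V^{\langle g\rangle}$-submodule of $W$ generated by $w$, which must then be all of $W$.)

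The heart of the matter is therefore claim (1.1), and this is where I expect the real work to be. The idea is to pass to the Lie algebra $\mathfrak g(V)$ spanned by the Fourier modes $a_n$, $a\in V$, $n\in\mathbb Z$, acting on $\mathcal M$, and its subalgebra $\mathfrak g(V^{\langle g\rangle})$; the twisted action makes $g$ act on $\mathcal M$ so that $\mathcal M$ decomposes into $g$-eigenspaces, and one shows that for a nonzero diagonal vector $w$ one can, using the irreducibility of each $W\circ g^i$ as a $V$-module together with the inequivalence $W\circ g^i\ncong W\circ g^j$ for $i\neq j$, separate the coordinates: there exist elements of the enveloping algebra of $\mathfrak g(V^{\langle g\rangle})$ that act as an arbitrary prescribed tuple of projections on the orbit of $w$. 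Concretely this is the associative-algebra statement flagged as Lemma \ref{1.2} / Lemma \ref{1.4} in the introduction — a Jacobson-density / Chinese-remainder type argument showing that the image of $U(\mathfrak g(V^{\langle g\rangle}))$ in $\prod_i \mathrm{End}(W\circ g^i)$ is, on the relevant finitely many vectors, the full product. I would isolate that statement, prove it by a density argument (using that each $W\circ g^i$ is irreducible over $V$ hence over the full mode algebra, and that distinct $i$ give nonisomorphic modules so no intertwiner identifies them), and then deduce (1.1) immediately: applying such separating elements to $(w,\dots,w)$ produces $(0,\dots,w,\dots,0)$, and then irreducibility of the $i$-th summand over $V$ — combined with the averaging trick $\frac1p\sum_{k} g^k$ to land back inside $V^{\langle g\rangle}$ — recovers the whole of $\mathcal M$.

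The main obstacle, I anticipate, is making the density argument honest when $W$ is only assumed to be a module with a $\mathbb Z$-grading bounded below but not of finite type in any stronger sense (for the ordinary-module case this is fine, but the proof is being set up to generalize): one must ensure that the relevant subquotients on which density is invoked are genuinely simple modules over the (non-unital, infinite-dimensional) mode algebra, that the action of $g$ is compatible, and that the averaging over $\langle g\rangle$ does not destroy the separating elements — i.e. that after projecting with $\frac1p\sum_k g^k$ one still has enough operators. Handling the bookkeeping of the twist $g$ versus the cyclic shift $\sigma$ on $\mathcal M$, and checking they interact correctly so that $\sigma$-invariants really do form a $V^{\langle g\rangle}$-module isomorphic to $W$, is the other place where care is needed.
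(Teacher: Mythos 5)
Your overall architecture is the right one (it is the strategy the paper itself uses to prove the weak-module generalization, Theorem \ref{general}, of which the statement here is the special case the paper simply cites from Dong--Mason), but the two places where the real work happens are wrong as written. First, the cyclicity claim (1.1) has to be cyclicity of $(w,\dots,w)$ over the \emph{whole} algebra $V$, i.e.\ over $U(\mathfrak g(V))$, not over $V^{\langle g\rangle}$: every $v\in V^{\langle g\rangle}$ acts by the same operator $v_n$ on each coordinate of $\mathcal M=\bigoplus_{i}W\circ g^{i}$, so $V^{\langle g\rangle}\cdot(w,\dots,w)$ is contained in the diagonal and your stated claim is false. For the same reason your density assertion is untenable: the image of $U(\mathfrak g(V^{\langle g\rangle}))$ in $\prod_i\mathrm{End}(W\circ g^{i})$ lies inside the diagonally embedded $\mathrm{End}(W)$, so it can never separate coordinates, and invoking its density is in any case circular, since irreducibility over $V^{\langle g\rangle}$ is what is being proved. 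The separating elements live in $U(\mathfrak g(V))$ (Lemmas \ref{1.2} and \ref{1.4}), using that the $W\circ g^{i}$ are irreducible and pairwise non-isomorphic as $V$-modules --- this is where primality of $p$ together with $W\circ g\ncong W$ enters --- and the averaging $\tfrac1p\sum_k g^k$ does not transport them into the orbifold algebra.

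Second, your reduction from cyclicity to irreducibility is circular: the $i$-th coordinate projection of the submodule generated by the diagonal vector is exactly the $V^{\langle g\rangle}$-submodule of $W$ generated by $w$, so asserting it ``must then be all of $W$'' assumes the conclusion. The missing mechanism, which is how the paper argues in Lemma \ref{general-criterion}, is the \emph{second} direct-sum decomposition of $\mathcal M$ into the twisted diagonals $\Delta^{(p,i)}(W)=\{(w,\zeta^{i}w,\dots,\zeta^{i(p-1)}w)\}$ --- precisely the eigenspaces of your shift operator $\sigma$, of which you use only the fixed space --- together with the shifting rule $V^{j}\cdot\Delta^{(p,i)}(W)\subseteq\Delta^{(p,i+j)}(W)$. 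Given a nonzero $V^{\langle g\rangle}$-submodule $S\subsetneq W$, cyclicity over $V$ gives $V\cdot\Delta^{(p,0)}(S)=\mathcal M$, and comparing the $\Delta^{(p,0)}$-components of $\sum_{j}V^{j}\cdot\Delta^{(p,0)}(S)$ forces $V^{0}\cdot S=W\subseteq S$, a contradiction; no coordinate projection (and no averaging trick) is needed. With these two corrections your outline becomes the paper's proof; as written, it does not close.
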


The goal of this paper is to extend this result for irreducible {\bf weak} modules for vertex operator
algebras.

\vskip 5mm

\section{ On cyclic vectors in a  direct sum of irreducible weak modules}
In this section, we prove one basic, but important technical result on cyclic vectors in a direct
sum of non-isomorphic weak modules for a vertex operator algebra. It turns out that the result can
be  proved much more easily in the context of associative algebras.

First we include the following result for associative algebras:
\begin{lemma}  \label{1.2} Let $\mathcal A$ be an associative algebra with unity.
Assume that $L_i,$  $i=1, \dots, t,$  are  non-isomorphic  irreducible $\mathcal A$-modules and $
\mathcal L = \bigoplus_{i=1} ^t L_i$.  Then  for each $w_i \ne 0$, $w_i \in L_i$,   a vector of the
form $(w_1, w_2, \dots, w_t )$ is cyclic in $\mathcal L$.

\end{lemma}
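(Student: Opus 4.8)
The plan is to prove Lemma \ref{1.2} by induction on $t$, the number of summands, with the base case $t=1$ being trivial since $w_1 \ne 0$ generates the irreducible module $L_1$. For the inductive step, I would use the classical device for proving irreducibility of direct sums of pairwise non-isomorphic simple modules: first show that the cyclic submodule $N = \mathcal A \cdot (w_1, \dots, w_t)$ projects onto each factor $L_i$ (this is immediate, since the projection of $N$ to $L_i$ is a nonzero submodule, hence all of $L_i$ by irreducibility), and then show $N$ actually contains each $L_i$ as an internal summand.

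The key step is to produce, for each index $j$, an element $a_j \in \mathcal A$ that acts as a projection-like operator isolating the $j$-th coordinate — more precisely, an element annihilating every $L_i$ with $i \ne j$ but acting nontrivially on $L_j$. The existence of such an element is where non-isomorphism is used: by the Jacobson density theorem, the image of $\mathcal A$ in $\bigoplus_i \mathrm{End}(L_i)$ (acting on finitely many pairwise non-isomorphic simples) is the full product $\prod_i \mathrm{End}(L_i)$ — this is the Chinese Remainder Theorem for simple modules, and it does not require finite-dimensionality of the $L_i$. Granting this, pick $a_j \in \mathcal A$ mapping to the tuple that is the identity on $L_j$ and zero on all other factors. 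Then $a_j \cdot (w_1, \dots, w_t) = (0, \dots, 0, w_j, 0, \dots, 0) \in N$, and since $w_j \ne 0$ generates $L_j$, we get $L_j \subseteq N$ (embedded in the $j$-th coordinate). Doing this for all $j$ yields $N \supseteq \bigoplus_j L_j = \mathcal L$, so $N = \mathcal L$ and the vector is cyclic.

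Alternatively, to keep the argument self-contained and avoid quoting density in this form, one can run the induction more explicitly: let $N = \mathcal A \cdot (w_1, \dots, w_t)$, and consider $N \cap L_t$ (viewing $L_t$ as the subspace of $\mathcal L$ with all other coordinates zero). This intersection is a submodule of $L_t$, so it is either $0$ or all of $L_t$. If it equals $L_t$, then $N$ contains $L_t$, hence also contains $(w_1, \dots, w_{t-1}, 0)$, and applying the inductive hypothesis inside $\bigoplus_{i=1}^{t-1} L_i$ finishes the proof. If instead $N \cap L_t = 0$, then the projection $\pi\colon N \to \bigoplus_{i=1}^{t-1} L_i$ is injective; but $\pi(N)$ is the cyclic submodule generated by $(w_1, \dots, w_{t-1})$, which by the inductive hypothesis is all of $\bigoplus_{i=1}^{t-1} L_i$, so $\pi$ is an isomorphism $N \cong \bigoplus_{i=1}^{t-1} L_i$. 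This would make $L_t$ a quotient of $N$ (via the $t$-th projection, which is surjective as noted above) isomorphic to a quotient of $\bigoplus_{i=1}^{t-1} L_i$, forcing $L_t \cong L_i$ for some $i < t$ by a composition-series or Schur-type argument — contradicting the non-isomorphism hypothesis.

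The main obstacle I anticipate is the step that rules out the ``diagonal'' embedding $N \cong \bigoplus_{i=1}^{t-1} L_i$: one must argue that a simple module cannot be a subquotient of a direct sum of simples none of which is isomorphic to it. For finite-length modules this is immediate from Jordan--Hölder, but here the $L_i$ need not have finite length over $\mathcal A$ in an obvious way, so the cleanest route really is to invoke the density/CRT statement directly, which handles arbitrary simple modules over any associative algebra with unit. I expect the paper applies this with $\mathcal A$ the universal enveloping algebra of the Lie algebra $\mathfrak g(V)$, so the lemma is stated at exactly the level of generality needed for Lemma \ref{1.4}.
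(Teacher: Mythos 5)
Your proposal is correct, and your first route is in essence the paper's own argument: the paper sets $J_i=\mathrm{Ann}(w_i)$, notes these are pairwise distinct maximal left ideals with $\mathcal A/J_i\cong L_i$, and uses a Chinese Remainder type argument to produce $u_i\in\bigcap_{j\ne i}J_j\setminus J_i$, so that $u_i(w_1,\dots,w_t)=(0,\dots,0,u_iw_i,0,\dots,0)$ and irreducibility finishes the proof; your $a_j$ obtained from the density theorem plays exactly the role of this $u_i$, the only difference being how its existence is justified (Jacobson--Chevalley density for the semisimple module $\bigoplus_i L_i$ versus comaximality of the annihilator ideals). One small overstatement on your side: for infinite-dimensional $L_i$ the image of $\mathcal A$ is only \emph{dense} in $\prod_i\mathrm{End}_{D_i}(L_i)$ (with $D_i=\mathrm{End}_{\mathcal A}(L_i)$), not equal to the full product, but density applied to the finite set $w_1,\dots,w_t$ gives an $a_j$ with $a_jw_i=\delta_{ij}w_i$, which is all you use. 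Your alternative induction is a genuinely different, more elementary route, and the obstacle you worry about at the end is not actually there: each $L_i$ is simple, hence of length one, so $\bigoplus_{i=1}^{t-1}L_i$ has finite length and Jordan--H\"older applies; even more directly, a surjection $\bigoplus_{i<t}L_i\to L_t$ must be nonzero on some summand $L_{i_0}$, and a nonzero homomorphism between simple modules is an isomorphism, contradicting $L_{i_0}\not\cong L_t$. So both of your arguments close, and neither needs finite-dimensionality of the $L_i$ --- which, as you correctly guessed, is the point of stating the lemma for arbitrary simple modules over $U(\mathfrak g(V))$.
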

\begin{proof}
Let $\mathcal U = \mathcal A . (w_1, w_2, \dots, w_t )$ be the $\mathcal{A}$-module generated by $
(w_1, w_2, \dots, w_t )$.
Let $J_i  = \mbox{Ann} (w_i) =\{a\in \mathcal A\mid a.w_i=0\}$ for $1\le i \le t$.   Then $J_i$ is
a left ideal in $\mathcal A$ and $\mathcal A / J_i \cong L_i$. Since   $L_i$'s are irreducible
$\mathcal A$--modules which are mutually  non-isomorphic,  we conclude:
\begin{itemize}
\item ideals $J_i$, $i=1, \dots, t$ ,  are maximal left ideals,
\item $J_i \ne J_j$ for $i \ne j$,
\item
 $ \mathcal  A / \bigcap _{i=1} ^ t J_i \cong \mathcal L.$

\end{itemize}
 Note that in the last conclusion we use the fact that $J_i$'s are maximal left ideals of $
\mathcal A$ and apply Chinese Remainder Theorem.  This implies that there is an
element
$$ u _i \in \bigcap_{1 \le j \le t, \ j \ne i } J_j,  \ u_i \notin J_i. $$
 Then one can construct  the  vector
$$ u_i (w_1, \dots, w_i, \dots , w_t ) =  (0,  \dots 0, u_i w_i, 0, \dots , 0 ),$$  which belongs to $L_i$, so $L_i \subset \mathcal U$ for all $i$. Therefore
$\mathcal L   \subset   \mathcal U$, which implies that  $\mathcal L = \mathcal U$.
 \end{proof}

 We want to show the analogous result for weak modules for a vertex operator algebra $V$.  For this
purpose, we  use  the Lie algebra  $\mathfrak g(V)$ associated to the vertex operator algebra
$V$  (cf. \cite{Bor}, \cite{DLM-1997}).

The Lie algebra $\mathfrak g(V)$ is realized on the vector space $$\mathfrak g(V) = \frac{ V \otimes {\C}[t,t^{-1}] }{   ( L(-1) \otimes 1 + 1 \otimes \frac{d}{dt} ). V \otimes {\C}[t,t^{-1}] },$$
where  the commutator is given by
$$  [ a\otimes t^n, b \otimes t^m ] = \sum_{i=0} ^{\infty} {n \choose i} (a_i b) \otimes t ^{n+ m -i}.$$

Then  by  \cite[Lemma 5.1]{DLM-1997} we have:

\begin{lemma} \label{lie-verteks} Let $V$ be a vertex operator algebra. We have:
 \begin{itemize}
\item Every weak $V$--module $W$  is a $\mathfrak g(V)$--module with the action
$$ v \otimes t^n \mapsto v_n \quad (v \in V, \ n \in {\Z}).$$

\item If $W$ is an  irreducible weak $V$--module, then $W$ is also an irreducible $\mathfrak g(V)$--module.
\end{itemize}
\end{lemma}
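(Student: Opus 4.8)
The plan is to reduce both parts of the statement to the Jacobi identity for $Y_W$. For the module structure: since $\mathfrak{g}(V)$ is the quotient of $V\otimes\C[t,t^{-1}]$ by the image of $L(-1)\otimes 1+1\otimes\frac{d}{dt}$, the rule $v\otimes t^n\mapsto v_n$ defines an operator on $W$ precisely when $(L(-1)v)_n=-n\,v_{n-1}$ holds on $W$ for all $v\in V$ and $n\in\Z$; this is the component form of the $L(-1)$--derivative property $Y_W(L(-1)v,z)=\frac{d}{dz}Y_W(v,z)$. Although that property is imposed as an extra axiom only for \emph{ordinary} modules in the definition recalled above, it in fact holds on every weak module: writing $L(-1)v=v_{-2}\mathbf 1$ in $V$, applying the iterate (``associativity'') formula that follows from the Jacobi identity for $Y_W$, and using $Y_W(\mathbf 1,z)=I_W$, the binomial sums collapse to $(v_{-2}\mathbf 1)_n=-n\,v_{n-1}$.

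Next I would check that the assignment is a Lie algebra morphism, i.e. that $[v_m,w_n]=\sum_{i\ge 0}\binom{m}{i}(v_iw)_{m+n-i}$ on $W$; this is the standard commutator formula, obtained by taking $\mathrm{Res}_{z_0}$ in the Jacobi identity for $Y_W$ (the two $\delta$--function terms on the left collapse to $[Y_W(v,z_1),Y_W(w,z_2)]$, and reading off the coefficient of $z_1^{-m-1}z_2^{-n-1}$ on the right produces the binomial sum). Running the same two computations with $Y$ in place of $Y_W$ simultaneously verifies that the bracket defining $\mathfrak{g}(V)$ is skew-symmetric and satisfies the Jacobi identity, so that $\mathfrak{g}(V)$ is genuinely a Lie algebra and $W$ a genuine $\mathfrak{g}(V)$--module.

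For irreducibility: once the first part is established this is immediate, because a subspace $S\subseteq W$ is a $\mathfrak{g}(V)$--submodule exactly when $v_n S\subseteq S$ for all $v\in V$ and $n\in\Z$, which is exactly the defining condition for $S$ to be a weak $V$--submodule. Hence $W$ has the same submodules in both senses, and in particular it is irreducible as a $\mathfrak{g}(V)$--module if and only if it is irreducible as a weak $V$--module.

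I do not expect a serious obstacle: everything is formal manipulation of $\delta$--functions and residues. The one point that needs care is the $L(-1)$--derivative relation in the weak setting, since it appears as a \emph{hypothesis} rather than a conclusion in the definition of ordinary module recalled above; one must be sure to extract it from the Jacobi identity alone, without accidentally invoking the grading or any other ordinary-module axiom.
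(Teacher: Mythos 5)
Your argument is correct. Note that the paper does not prove this lemma at all: it simply quotes it from Dong--Li--Mason (\cite[Lemma 5.1]{DLM-1997}), so what you have written is in effect the standard proof behind that citation rather than a different route. You correctly identify the only non-trivial point, namely well-definedness on the quotient defining $\mathfrak g(V)$, i.e.\ that $(L(-1)v)_n=-n\,v_{n-1}$ holds on an arbitrary weak module even though the $L(-1)$-derivative property is listed as an axiom only for ordinary modules; your derivation via $L(-1)v=v_{-2}\mathbf 1$, the iterate formula from the Jacobi identity, and $\mathbf 1_k=\delta_{k,-1}I_W$ does give exactly $-n\,v_{n-1}$ (checking separately $n<0$, $n=0$, $n>0$), and the commutator formula obtained by taking $\mathrm{Res}_{z_0}$ of the Jacobi identity gives the Lie algebra homomorphism property. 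The second bullet is, as you say, immediate, since a subspace of $W$ is $\mathfrak g(V)$-stable exactly when it is stable under all modes $v_n$, which is the definition of a weak $V$-submodule; in fact you get the stronger statement that the two notions of submodule coincide. The aside about re-deriving that the bracket on $\mathfrak g(V)$ is a Lie bracket is unnecessary (this is Borcherds' construction and can be taken as known), but it does no harm.
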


 \begin{lemma}  \label{1.4}
Assume that $L_i,$  $i=1, \dots, t,$  are   non-isomorphic  irreducible weak $V$--modules and  $ \mathcal L = \bigoplus_{i=1} ^t L_i$.  Then for each $w_i \ne 0$, $w_i \in L_i$,  a  vector of the form $(w_1, w_2, \dots, w_t )$ is cyclic in $\mathcal L$.

\end{lemma}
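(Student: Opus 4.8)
The plan is to reduce Lemma \ref{1.4} to the purely associative-algebra statement of Lemma \ref{1.2} by passing through the Lie algebra $\mathfrak g(V)$ and its universal enveloping algebra. By Lemma \ref{lie-verteks}, every weak $V$--module $L_i$ is a $\mathfrak g(V)$--module, and hence a module for the associative algebra $\mathcal A := U(\mathfrak g(V))$; moreover, if $L_i$ is irreducible as a weak $V$--module it is irreducible as a $\mathfrak g(V)$--module, and therefore as an $\mathcal A$--module. So $L_1,\dots,L_t$ are irreducible $\mathcal A$--modules, and $\mathcal L=\bigoplus_{i=1}^t L_i$ is a $\mathcal A$--module with the action agreeing with the weak $V$--module action of the modes $v_n$.

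The one genuine point to check is that the $L_i$ remain \emph{pairwise non-isomorphic} as $\mathcal A$--modules. This is immediate: an isomorphism of $\mathcal A$--modules $L_i\to L_j$ is in particular an isomorphism of $\mathfrak g(V)$--modules, hence intertwines all the operators $v_n=v\otimes t^n$ for $v\in V$, $n\in\Z$; but these operators are exactly the coefficients of all the vertex operators $Y_{L_i}(v,z)$, so such a map is an isomorphism of weak $V$--modules, contradicting the hypothesis. Thus the hypotheses of Lemma \ref{1.2} are satisfied with this $\mathcal A$ and these $L_i$.

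Now I apply Lemma \ref{1.2}: for each choice of $w_i\ne 0$, $w_i\in L_i$, the vector $(w_1,\dots,w_t)$ is cyclic for the $\mathcal A$--module $\mathcal L$, i.e.\ $\mathcal A.(w_1,\dots,w_t)=\mathcal L$. Since the action of $\mathcal A=U(\mathfrak g(V))$ on each $L_i$, and hence on $\mathcal L$, is generated by the modes $v_n$ of elements of $V$, the $\mathcal A$--submodule generated by $(w_1,\dots,w_t)$ coincides with the weak $V$--submodule it generates. Therefore $(w_1,\dots,w_t)$ is cyclic in $\mathcal L$ as a weak $V$--module, which is the claim.

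The main obstacle is essentially bookkeeping rather than a deep difficulty: one must be careful that ``submodule'' means the same thing on the vertex-algebra side and the associative-algebra side, i.e.\ that a subspace stable under all modes $v_n$ is the same as a weak $V$--submodule, and that irreducibility transfers cleanly in both directions — all of which is precisely the content of Lemma \ref{lie-verteks} together with the observation that $U(\mathfrak g(V))$ is generated by the $v\otimes t^n$. Once this dictionary is in place, the lemma follows from Lemma \ref{1.2} with no further computation.
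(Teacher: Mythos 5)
Your proof is correct and follows exactly the paper's route: transfer the $L_i$ to irreducible modules over $\mathcal A = U(\mathfrak g(V))$ via Lemma \ref{lie-verteks} and invoke Lemma \ref{1.2}; your extra checks that non-isomorphism and submodule generation match on both sides are details the paper leaves implicit.
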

\begin{proof}
 By Lemma \ref{lie-verteks}, we have that  $L_i$, $i=1, \dots, t$  are irreducible modules  for the associative algebra $\mathcal A = U( \mathfrak g(V))$. Then the  assertion follows by applying Lemma \ref{1.2}.\end{proof}

  \section{Main result: order $2$ case}
We shall first consider the case of automorphisms of order two.

Let $\theta$ be an order two automorphism of $V$.
 Let
\[
V^{+}=\{v\in V\ \vert\ \theta(v)=v\},\quad V^{-}=\{v\in V\ \vert\ \theta(v)=-v\}.
\]
Then $V^{+}$ is a vertex subalgebra of $V$ and $V^{-}$ is a $V^{+}$-module.

\begin{theorem} \label{order 2 criterion}  Let $V$ be a  vertex operator  algebra and  $W$ be an irreducible weak $V$\textendash module
such that $ W_{\theta} =  W \circ \theta \ncong W$. Then $W$ is an irreducible  weak $V^{+}$\textendash module. \end{theorem}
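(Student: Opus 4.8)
The plan is to follow the strategy outlined in the introduction for part (1) of the Main Theorem, specialized to $p=2$. First I would set up the ``doubled'' module. Put $\mathcal M = W \oplus (W\circ\theta)$, and observe that as a $V^+$--module each summand carries the {\em same} action: indeed, for $v\in V^+$ we have $Y_{W\circ\theta}(v,z) = Y_W(\theta v,z) = Y_W(v,z)$. So $\mathcal M \cong W \oplus W$ as $V^+$--modules, where both copies are the restriction of the original $V$--action on $W$. The point of keeping the two $V$--module structures distinct is that the diagonal-type subspace behaves well; concretely, consider the map that sends $w \in W$ to $(w,w) \in \mathcal M$. I would check that for $v\in V^+$ this intertwines the $V^+$--actions, and that a $V$--module $S\subseteq W$ sits inside $\mathcal M$ appropriately after this identification.

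Next, the key reduction. Suppose $S$ is a nonzero $V^+$--submodule of $W$; I want $S = W$. Pick any $0\ne w\in S$. I would like to conclude that $S$ contains enough vectors to force $S=W$; the mechanism is the cyclicity statement. Here the two $V$--modules $W$ and $W\circ\theta$ are inequivalent irreducible weak $V$--modules by hypothesis, so Lemma \ref{1.4} (with $t=2$, $L_1 = W$, $L_2 = W\circ\theta$) tells us the vector $(w,w)\in \mathcal M = W\oplus(W\circ\theta)$ is cyclic under the full $V$--action: $U(\mathfrak g(V)).(w,w) = \mathcal M$. I would then argue that because $S$ is $V^+$--stable and contains $w$, the $V^+$--submodule of $\mathcal M$ generated by $(w,w)$ lands inside $S\oplus S$ (using the observation above that $(w,w)$-type vectors and the $V^+$--action interact diagonally), while the $V^+$--submodule generated by $(w,w)$ already contains, after projection, all of $W$. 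Thus $S=W$.

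The main obstacle, as I see it, is bridging the gap between the $V$--cyclicity of $(w,w)$ given by Lemma \ref{1.4} and the $V^+$--statement I actually need. Lemma \ref{1.4} uses the whole algebra $\mathfrak g(V)$, not $\mathfrak g(V^+)$, so a priori it produces $\mathcal M$ from $(w,w)$ using modes $v_n$ for {\em all} $v\in V$, including $v\in V^-$. I need to see that applying $v_n$ with $v\in V^-$ to a vector of the form $(x,x)$ does not help break the diagonal in a way that escapes $S$ --- or, more to the point, I should reorganize: decompose any $v\in V$ as $v^+ + v^-$, and track how $v_n$ acts on $\mathcal M$ componentwise, noting $Y_{W\circ\theta}(v^-,z) = -Y_W(v^-,z)$, so $v^-_n$ acts as $(v^-_n x, -v^-_n y)$ on $(x,y)$ while $v^+_n$ acts diagonally. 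The right move is therefore to {\em not} pass through the diagonal embedding at all, but instead: given nonzero $w\in S$, use $V$--irreducibility of $W$ directly. Actually the cleanest route, which I would pursue, is: the submodule $S\subseteq W$ is a $V^+$--module; form $S + \theta\cdot(\text{stuff})$ --- no. Let me state the intended route precisely. Since $W$ is irreducible over $V$ hence over $\mathfrak g(V)$, and $S$ nonzero over $V^+$, I want: the $V$--submodule generated by $S$ is all of $W$, and then a Frobenius-reciprocity / averaging argument (as in \cite{DM}, \cite{DY}) shows $S$ must already be all of $W$ because $W\not\cong W\circ\theta$ means $W$ restricted to $V^+$ has no ``$\theta$-twist ambiguity.'' Concretely I expect to show $W = S \oplus \theta(S)'$ is impossible unless $\theta(S)' = 0$. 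Packaging this correctly --- identifying exactly where $W\circ\theta\ncong W$ is used --- is the delicate part; I anticipate that the honest proof realizes $W$ as a quotient of $\mathcal M$ by a $V$--submodule and invokes Lemma \ref{1.4} to show any nonzero $(w,w)$ generates, then transfers this to $V^+$ via the fact that $\mathfrak g(V)$ is generated as a Lie algebra (or its enveloping algebra as an algebra) in a controlled way over the $V^+$--part together with finitely many $V^-$--modes, and these extra modes, acting on $S\oplus S\subseteq\mathcal M$, stay within $S\oplus S$ since on the diagonal $v^-_n(x,x) = (v^-_nx, -v^-_nx)$ and $v^-_nx$ lies in $W$ --- here one needs $S$ to in fact be $V$--stable, which will follow once we know $S=W$, making the argument circular unless set up as in the paper. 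I would resolve the circularity exactly as the general-$p$ proof does: work inside $\mathcal M$, let $S$ be identified with a $V^+$--submodule there, show it is forced to be everything by cyclicity of a single $(w,w)$.
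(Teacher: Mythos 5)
Your setup coincides with the paper's: the doubled module $\mathcal M = W\oplus(W\circ\theta)$, the diagonal vectors $(w,w)$, and Lemma \ref{1.4} giving that $(w,w)$ is cyclic for the full $V$-action because $W\ncong W\circ\theta$. But at the decisive step the argument stops. You never derive a contradiction from the $V$-cyclicity, and the one concrete claim you make in that direction --- that ``the $V^{+}$-submodule of $\mathcal M$ generated by $(w,w)$ already contains, after projection, all of $W$'' --- is false: since $S$ is $V^{+}$-stable and $v\in V^{+}$ acts diagonally ($\theta(v)_n=v_n$), that $V^{+}$-submodule consists of diagonal vectors $(s,s)$ with $s\in S$, so its projection lies in $S$, not in all of $W$; cyclicity holds only for the full $V$-action. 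The subsequent detours (Frobenius reciprocity/averaging, ``$W=S\oplus\theta(S)'$'') are abandoned, and your final sentence merely restates the goal (``show it is forced to be everything by cyclicity'') without supplying the mechanism.

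The missing idea is the eigenspace decomposition of $\mathcal M$ itself. Set $\Delta^{\pm}(W)=\{(w,\pm w)\ \vert\ w\in W\}$, so $\mathcal M=\Delta^{+}(W)\oplus\Delta^{-}(W)$, and note --- this is exactly your componentwise computation, read correctly --- that $V^{+}.\Delta^{+}(W)\subseteq\Delta^{+}(W)$ while $V^{-}.\Delta^{+}(W)\subseteq\Delta^{-}(W)$: odd modes push the diagonal into the anti-diagonal, so they cannot contribute to the diagonal component at all. Hence for a $V^{+}$-submodule $0\ne S\subsetneq W$, the $\Delta^{+}(W)$-component of $V.\Delta^{+}(S)$ is $V^{+}.\Delta^{+}(S)$, which is contained in $\Delta^{+}(S)\subsetneq\Delta^{+}(W)$ because $S$ is $V^{+}$-stable; on the other hand, cyclicity of $(w,w)$ for any $0\ne w\in S$ gives $V.\Delta^{+}(S)=\mathcal M$, which forces $V^{+}.\Delta^{+}(S)=\Delta^{+}(W)$ --- a contradiction. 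In particular, no $V$-stability of $S$ is ever needed, so the circularity you worried about does not arise: your observation that $v^{-}_n(x,x)=(v^{-}_nx,-v^{-}_nx)$ is the solution (it isolates the diagonal component), not the obstacle.
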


\begin{proof} Consider a $V$\textendash module $\mathcal{M}=W\oplus W_{\theta}$.
Define now the map
\[
\Delta^{\pm}:W\rightarrow\mathcal{M},\quad w\mapsto(w,\pm w).
\]
Let
$$ \Delta^{\pm}(W) = \{ (w, \pm w) \ \vert  \ w \in W\}. $$
Then we have
\[
\mathcal{M}=\Delta^{+}(W)\bigoplus\Delta^{-}(W).
\]

Moreover, $\Delta^{\pm}$ are $V^{+}$\textendash homomorphisms.
Next we notice that
\begin{align*}
 & V^{+}.\Delta^{+}\left(W\right)\\
 & =\text{Span}_{\mathbb{C}}\left\{ \left(v_{n}w,\theta\left(v\right)_{n}w\right),v\in V^{+},w\in W\right\} \\
 & =\text{Span}_{\mathbb{C}}\left\{ \left(\left(v_{n}w,v_{n}w\right),v\in V^{+},w\in W\right)\right\} \\
 & =\Delta^{+}\left(W\right)
\end{align*}
and
\begin{align*}
 & V^{-}.\Delta^{+}\left(W\right)\\
 & =\text{Span}_{\mathbb{C}}\left\{ \left(v_{n}w,\theta\left(v\right)_{n}w\right),v\in V^{-},w\in W\right\} \\
 & =\text{Span}_{\mathbb{C}}\left\{ \left(\left(v_{n}w,-v_{n}w\right),v\in V^{-},w\in W\right)\right\} \\
 & =\Delta^{-}\left(W\right)
\end{align*}
Assume that $W$ is not irreducible $V^{+}$\textendash module. Then
there is a $V^{+}$\textendash submodule $0\ne S\subsetneqq W$. In
particular, $0\ne\Delta^{+}(S)\subsetneqq\Delta^{+}(W)$. But Lemma
\ref{1.4} implies that $V.\Delta^{+}(S)=\mathcal{M}$.
Since
\[
V^{\pm}.\Delta^{+}(S)\subset\Delta^{\pm}(W),
\]
we must have that $V^{+}.\Delta^{+}(S)=\Delta^{+}(W)$ which is a
contradiction. The proof follows. \end{proof}

\section{General case}

Assume that $g$ is an automorphism of  arbitrary (not necessarily prime) order $p$. Then
\begin{eqnarray}
V=V^{0}\oplus V^{1}\oplus\cdots\oplus V^{p-1} \label{decomp-p-general}
\end{eqnarray}
where
\[
V^{i}=\{v\in V\ \vert\ gv=\zeta^{i}v\}
\]
and $\zeta$ is a primitive $p$\textendash th root of unity.

 Let $W$ be a  weak $V$--module.
Let
\[
\mathcal{M}=W_{0}\oplus W_{1}\oplus\cdots\oplus W_{p-1},
\]
where $W_i =W\circ g^i, i=0,1,\cdots, p-1$.
Let $\Delta^{(p,i)}$ be the $V^{0}$\textendash homomorphism defined
by
\[
w\mapsto(w,(\zeta^{i})w,\cdots,(\zeta^{i})^{p-1}w).
\]

\begin{lemma} We have:
\begin{itemize}
\item[(1)] $\mathcal{M}=\bigoplus_{i=0}^{p-1}\Delta^{(p,i)}(W).$
\item[(2)] $V^{j}.\Delta^{(p,i)}(W)\subset\Delta^{(p,i+j)}(W).$
\end{itemize}
\begin{proof} The proof of (1) is easy. Let us prove (2).

Take $v\in V^{j}$. For $w'\in W_{r}$ we have
\[
v_{n}w'=\zeta^{rj}v_{n}w',
\]
which implies $$v_n (w, (\zeta^i ) w, \cdots, (\zeta^i
)^{p-1} w)
 =  ( v_n w, \zeta^{ i + j }v_n w, \cdots,(\zeta^{i+j}
)^{p-1} v_n w  ) \in \Delta^{(p,
i +j )} (W). $$The Lemma holds. \end{proof} \end{lemma}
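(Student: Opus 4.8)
The plan is to prove the lemma by two direct computations, mirroring exactly the structure already established in the order-two case (Section 4). The statement has two parts: first that $\mathcal M$ decomposes as the internal direct sum $\bigoplus_{i=0}^{p-1}\Delta^{(p,i)}(W)$, and second that $V^j$ carries $\Delta^{(p,i)}(W)$ into $\Delta^{(p,i+j)}(W)$ (indices mod $p$).

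For part (1), I would argue as follows. Each $\Delta^{(p,i)}(W)$ is a subspace of $\mathcal M$, and there is an obvious linear map $\mathcal M = W_0\oplus\cdots\oplus W_{p-1}\to\bigoplus_i \Delta^{(p,i)}(W)$ sending an element of the $r$-th summand to its components via the Vandermonde-type matrix $(\zeta^{ir})_{i,r}$. Since $\zeta$ is a primitive $p$-th root of unity, this matrix is invertible (its determinant is a nonzero Vandermonde determinant), so the map $\bigoplus_i \Delta^{(p,i)}(W)\to\mathcal M$ is a linear isomorphism; concretely, any $(w_0,\dots,w_{p-1})\in\mathcal M$ is uniquely written as $\sum_{i}\Delta^{(p,i)}(u_i)$ with $u_i = \frac1p\sum_r \zeta^{-ir} w_r$. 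This simultaneously gives that the sum is direct and that it exhausts $\mathcal M$.

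For part (2), I would take $v\in V^j$, so $gv = \zeta^j v$, and compute the action of $v_n$ on a vector $\Delta^{(p,i)}(w) = (w,\zeta^i w,\dots,\zeta^{i(p-1)}w)$. On the $r$-th component $W_r = W\circ g^r$, the operator $v_n$ acts as $(g^r v)_n = (\zeta^{rj}v)_n = \zeta^{rj} v_n$ (acting on $W$). Hence $v_n\cdot\Delta^{(p,i)}(w)$ has $r$-th component $\zeta^{rj}\zeta^{ir} v_n w = \zeta^{r(i+j)} (v_nw)$, which is precisely the $r$-th component of $\Delta^{(p,i+j)}(v_n w)$. Since the operators $v_n$ span the action of $V^j$ on $\mathcal M$, this proves $V^j\cdot\Delta^{(p,i)}(W)\subset\Delta^{(p,i+j)}(W)$, with indices read modulo $p$ because $\zeta^p = 1$.

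Neither part presents a real obstacle; the only point needing a little care is being explicit about why $v_n$ acts on the summand $W_r$ as $\zeta^{rj}v_n$ — this is just unwinding the definition $Y_{W\circ g}(v,z) = Y_W(gv,z)$ iterated $r$ times together with $v\in V^j$ — and making sure the index arithmetic is consistently taken modulo $p$. One could equivalently phrase everything in terms of the $\mathfrak g(V)$-module structure from Lemma \ref{lie-verteks}, writing the action of $v\otimes t^n$, but the direct vertex-operator computation above is shortest. The verification that $\Delta^{(p,i)}$ is a $V^0$-homomorphism (needed implicitly to call the $\Delta^{(p,i)}(W)$ submodules over $V^0$) is the special case $j=0$ of part (2).
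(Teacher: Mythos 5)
Your argument is correct and follows essentially the same route as the paper: part (2) is exactly the paper's computation, unwinding $Y_{W\circ g^{r}}(v,z)=Y_{W}(g^{r}v,z)$ with $g^{r}v=\zeta^{rj}v$ to get the $r$-th component $\zeta^{r(i+j)}v_{n}w$ of $\Delta^{(p,i+j)}(v_{n}w)$, and your Vandermonde (discrete Fourier inversion) argument for part (1) just fills in the step the paper dismisses as ``easy.'' Your closing remark that the $j=0$ case shows each $\Delta^{(p,i)}$ is a $V^{0}$-homomorphism is a correct and useful observation consistent with how the maps are used in the paper.
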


\begin{lemma} \label{general-criterion}   Let $W$ be  an irreducible weak $V$--module and $\mathcal M$  be as above. Assume   that  for every $w \in W$, $w \ne 0$,
 $$   (w, \cdots, w)   \quad \mbox{is cyclic in}\ \ \mathcal M  $$
Then $W$ is an irreducible weak $V^{0}$--module. \end{lemma}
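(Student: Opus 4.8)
The plan is to mimic the proof of Theorem \ref{order 2 criterion}, replacing the two-term decomposition with the $p$-term decomposition $\mathcal{M}=\bigoplus_{i=0}^{p-1}\Delta^{(p,i)}(W)$ already established. First I would suppose, for contradiction, that $W$ is not an irreducible weak $V^{0}$-module, so there is a $V^{0}$-submodule $0\ne S\subsetneqq W$. Since $\Delta^{(p,0)}$ is a $V^{0}$-homomorphism and is injective (it is the ``diagonal'' embedding $w\mapsto(w,\dots,w)$), the image $\Delta^{(p,0)}(S)$ is a $V^{0}$-submodule of $\Delta^{(p,0)}(W)$ with $0\ne\Delta^{(p,0)}(S)\subsetneqq\Delta^{(p,0)}(W)$.

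Next, pick any $0\ne w\in S$. By hypothesis $(w,\dots,w)=\Delta^{(p,0)}(w)$ is cyclic in $\mathcal{M}$, i.e.\ $V.\Delta^{(p,0)}(w)=\mathcal{M}$. Now I would use part (2) of the preceding Lemma: since $V=\bigoplus_{j=0}^{p-1}V^{j}$ and $V^{j}.\Delta^{(p,i)}(W)\subset\Delta^{(p,i+j)}(W)$ (indices mod $p$), acting by $V^{j}$ on the piece $\Delta^{(p,0)}(W)$ lands inside $\Delta^{(p,j)}(W)$, and these target spaces are linearly independent by part (1). Hence $V.\Delta^{(p,0)}(S)=\mathcal{M}$ forces, componentwise, $V^{0}.\Delta^{(p,0)}(S)=\Delta^{(p,0)}(W)$. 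But $V^{0}.\Delta^{(p,0)}(S)\subseteq\Delta^{(p,0)}(S)$ because $S$ is a $V^{0}$-submodule and $\Delta^{(p,0)}$ intertwines the $V^{0}$-actions; so $\Delta^{(p,0)}(S)=\Delta^{(p,0)}(W)$, hence $S=W$ by injectivity of $\Delta^{(p,0)}$, contradicting $S\subsetneqq W$.

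The only genuinely delicate point — though it is not hard — is making the ``componentwise projection'' argument rigorous: from $\mathcal{M}=V.\Delta^{(p,0)}(S)=\sum_{j}V^{j}.\Delta^{(p,0)}(S)$ with each summand inside the respective $\Delta^{(p,j)}(W)$, and $\mathcal{M}=\bigoplus_{j}\Delta^{(p,j)}(W)$, one concludes $V^{j}.\Delta^{(p,0)}(S)=\Delta^{(p,j)}(W)$ for every $j$, in particular for $j=0$. I expect this directness to be the main (minor) obstacle, and I would dispatch it simply by comparing the two direct-sum decompositions. Everything else — the injectivity of $\Delta^{(p,0)}$, the $V^{0}$-equivariance, and the invocation of cyclicity — is routine, with the cyclicity hypothesis itself being supplied to us as an assumption of the Lemma (and, in the applications, verified via Lemma \ref{1.4}).
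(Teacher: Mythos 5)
Your proof is correct and follows essentially the same route as the paper's: embed the putative proper submodule $S$ via the diagonal map $\Delta^{(p,0)}$, invoke the cyclicity hypothesis to get $V.\Delta^{(p,0)}(S)=\mathcal{M}$, and compare with the decomposition $\mathcal{M}=\bigoplus_{j}\Delta^{(p,j)}(W)$ together with $V^{j}.\Delta^{(p,0)}(S)\subset\Delta^{(p,j)}(W)$ to force $V^{0}.\Delta^{(p,0)}(S)=\Delta^{(p,0)}(W)$, contradicting $\Delta^{(p,0)}(S)\subsetneqq\Delta^{(p,0)}(W)$. The ``componentwise'' step you flag as delicate is exactly the implicit step in the paper's argument, and your spelling it out is fine.
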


\begin{proof}  Assume that $W$
is not a simple $V^{0}$--module. Then there is a $V^{0}$--submodule
$0\ne S\subsetneqq W$. In particular, $$ 0 \ne \Delta^{(p, 0)}
(S) \subsetneqq \Delta^{(p, 0)} (W). \label{formula-1} $$ Since  each $(w,\cdots,w)$, with $w\ne0$,
is a cyclic vector in $\mathcal{M}$,  we get  $$ \label{cyclic}
\mathcal M = V. \Delta^{(p, 0)} (S). $$ This implies that $V^{0}.\Delta^{(p,0)}(S)=\Delta^{(p,0)}(W)$.
A contradiction. The proof follows. \end{proof}

Lemmas \ref{1.4} and  \ref{general-criterion}   imply our main result.

\begin{theorem} \label{general}    Let $W$ be  an irreducible  weak $V$--module, and $g$ an automorphism
of finite order such that $W \circ g^i \ncong W$ for all $i$. Then $W$ is an irreducible weak
$V^{0}$--module. \end{theorem}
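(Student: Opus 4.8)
\textbf{Proof proposal for Theorem \ref{general}.}
The plan is to reduce the statement directly to the two lemmas already in place, so that essentially nothing new has to be proved. First I would invoke Lemma \ref{1.4} in the special situation $L_i = W_i = W\circ g^i$ for $i = 0,1,\dots,p-1$. By hypothesis the modules $W\circ g^i$ are pairwise non-isomorphic irreducible weak $V$--modules, so the lemma applies and tells us that for every $w \ne 0$ the diagonal vector $(w,\dots,w) \in \mathcal M = \bigoplus_{i=0}^{p-1} W\circ g^i$ is cyclic for the $V$--action on $\mathcal M$. This is exactly the hypothesis required by Lemma \ref{general-criterion}.

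Second, I would apply Lemma \ref{general-criterion} to conclude that $W$ is an irreducible weak $V^0$--module, which is precisely the assertion, since $V^0 = V^{\langle g\rangle}$ by definition of the grading in \eqref{decomp-p-general}. So the proof is a two-line composition: $W\circ g^i \ncong W$ for all $i$ forces the $W\circ g^i$ to be mutually non-isomorphic (using irreducibility of each one, together with the fact that isomorphism of weak modules is an equivalence relation, so $W\circ g^i \cong W\circ g^j$ with $i \ne j$ would give $W \cong W\circ g^{j-i}$, contradicting the hypothesis); then Lemma \ref{1.4} gives cyclicity of the diagonal vectors; then Lemma \ref{general-criterion} gives irreducibility over $V^0$.

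The only point that deserves a sentence of care is the claim that the hypothesis ``$W\circ g^i \ncong W$ for all $i$'' actually yields ``the $W\circ g^i$, $i=0,\dots,p-1$, are pairwise non-isomorphic,'' as opposed to merely non-isomorphic to $W$ itself. As noted above this is immediate from the group structure: the map $M \mapsto M\circ g$ is invertible on isomorphism classes with inverse $M \mapsto M \circ g^{-1} = M\circ g^{p-1}$, so $W\circ g^i \cong W\circ g^j$ implies $W \cong W\circ g^{j-i}$, and $j - i$ is a nonzero residue mod $p$, contradicting the hypothesis.

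I do not anticipate any genuine obstacle here; the real content was already extracted into Lemma \ref{1.2} (the Chinese Remainder Theorem argument for associative algebras), propagated to weak $V$--modules via the functor $W \mapsto U(\mathfrak g(V))\text{-mod}$ in Lemmas \ref{lie-verteks} and \ref{1.4}, and combined with the grading bookkeeping of Lemma \ref{general-criterion}. If anything, I would simply make sure that the reduction in Lemma \ref{general-criterion} is applied with the correct module $\mathcal M$ (the one built from the twists $W\circ g^i$, with the $\Delta^{(p,i)}$ embeddings), and that the passage from the $V$--action to the $V^0$--action uses $V^j . \Delta^{(p,0)}(S) \subset \Delta^{(p,j)}(W)$ together with the direct sum decomposition to pin the $V^0$--part down to $\Delta^{(p,0)}(W)$.
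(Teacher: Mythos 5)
Your proposal is correct and follows essentially the same route as the paper, whose proof of Theorem \ref{general} is precisely the composition of Lemma \ref{1.4} (applied to the pairwise non-isomorphic modules $W\circ g^i$) with Lemma \ref{general-criterion}. Your extra remark that $W\circ g^i \cong W\circ g^j$ would force $W \cong W\circ g^{j-i}$ is a valid and welcome clarification of a point the paper leaves implicit.
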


\section{On complete reducibility of certain $V^{\langle g \rangle}$--modules}
In this section we shall use the following very general version of Schur's Lemma. Proof can be found in \cite[Section 4.1.2]{Wall}.

\begin{lemma} \label{schur}
Assume that $W_1$ and $W_2$ are irreducible modules for an associative algebra $A$. Assume that $W_1$ and $W_2$ have countable dimensions over ${\C}$. Then
$ \mbox{dim}  \ Hom _{A} (W_1, W_2)\le 1$  and $ \mbox{dim}  \ Hom _{A} (W_1, W_2) = 1$ if and only if $W_1 \cong W_2$.
\end{lemma}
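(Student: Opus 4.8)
The plan is to reduce everything to the ordinary Schur Lemma together with a countability (Dixmier-type) argument that rules out large division algebras over $\C$. First I would dispose of the case $W_1\ncong W_2$: if $\phi\in\mathrm{Hom}_A(W_1,W_2)$ is nonzero, then $\ker\phi$ and $\mathrm{im}\,\phi$ are submodules, so by irreducibility $\ker\phi=0$ and $\mathrm{im}\,\phi=W_2$, making $\phi$ an isomorphism — a contradiction. Hence $\mathrm{Hom}_A(W_1,W_2)=0$ in this case, and it remains to treat $W_1=W_2=W$. Set $D=\mathrm{End}_A(W)$. By the same submodule argument every nonzero element of $D$ is invertible, so $D$ is a division algebra over $\C$, with the scalar endomorphisms giving a copy of $\C$ in its center; the assertion $\dim_\C\mathrm{Hom}_A(W_1,W_2)=1$ is therefore equivalent to $D=\C$.

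Next I would record the countability bound. Fixing $0\neq w\in W$, the evaluation map $D\to W$, $\phi\mapsto\phi(w)$, is injective, since $\phi(w)=0$ would make $\ker\phi$ a nonzero, hence full, submodule, forcing $\phi=0$. Thus $\dim_\C D\le\dim_\C W\le\aleph_0$. Now suppose for contradiction that there is $d\in D\setminus\C$. Since $\C$ is algebraically closed, any element of $D$ algebraic over $\C$ generates a finite field extension of $\C$ inside $D$, hence lies in $\C$; so $d$ is transcendental over $\C$ and the subfield $\C(d)\subseteq D$ is isomorphic to the field of rational functions in one variable. Then the elements $(d-\lambda)^{-1}$, $\lambda\in\C$, are $\C$-linearly independent in $D$: a nontrivial finite relation, cleared of denominators, would yield a nonzero polynomial in $d$ that is zero in $D$, contradicting transcendence. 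Since $\C$ is uncountable, this contradicts $\dim_\C D\le\aleph_0$. Therefore $D=\C$, which finishes the proof.

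The only genuinely nontrivial step is the last one — the statement that a division algebra of countable $\C$-dimension over the uncountable algebraically closed field $\C$ must equal $\C$ — and within it the verification that the $(d-\lambda)^{-1}$ are linearly independent, which is a routine partial-fractions computation. Everything else is a direct application of the submodule structure of irreducible modules, exactly as in the classical Schur argument.
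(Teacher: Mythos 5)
Your proof is correct. The paper does not actually prove this lemma; it only cites \cite[Section 4.1.2]{Wall}, and your argument --- reduce to $D=\mathrm{End}_A(W)$ being a division algebra, bound $\dim_{\mathbb C}D$ by the countable dimension of $W$ via the injective evaluation map $\phi\mapsto\phi(w)$, and rule out any $d\in D\setminus\mathbb{C}$ because the uncountably many resolvents $(d-\lambda)^{-1}$, $\lambda\in\mathbb{C}$, would be linearly independent --- is exactly the standard Dixmier-type proof given in that reference, so there is nothing to add.
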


\begin{lemma} \label{countably}
Assume that $V$ is a vertex operator algebra. Then every irreducible weak $V$-module $W$ is  countably dimensional.
\end{lemma}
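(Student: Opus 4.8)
Every irreducible weak $V$-module $W$ is countably dimensional over $\C$.

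The plan is to exhibit a countable spanning set for $W$. First I would pick any nonzero vector $w \in W$. Since $W$ is irreducible as a weak $V$-module, Lemma \ref{lie-verteks} tells us that $W$ is irreducible as a module for the associative algebra $\mathcal A = U(\mathfrak g(V))$, hence $W = \mathcal A . w$. So it suffices to show that $\mathcal A = U(\mathfrak g(V))$ is countably dimensional over $\C$; then $\mathcal A . w$ is a countable-dimensional quotient of $\mathcal A$ (as a vector space, via $a \mapsto a.w$), and we are done.

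The countability of $U(\mathfrak g(V))$ I would get from the PBW theorem together with the countable-dimensionality of $\mathfrak g(V)$ itself. The Lie algebra $\mathfrak g(V)$ is a quotient of $V \otimes \C[t,t^{-1}]$; now $V = \amalg_{n \in \Z} V_{(n)}$ with each $V_{(n)}$ finite-dimensional and $V_{(n)} = 0$ for $n$ sufficiently small, so $V$ has countable dimension, and hence $V \otimes \C[t,t^{-1}]$ — a countable tensor-product of countable-dimensional spaces — is countably dimensional, so its quotient $\mathfrak g(V)$ is too. Fixing an ordered countable basis of $\mathfrak g(V)$, the PBW monomials form a countable basis of $U(\mathfrak g(V))$ (the set of finite nondecreasing sequences from a countable alphabet is countable). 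Therefore $\mathcal A$ is countably dimensional, and so is every cyclic $\mathcal A$-module, in particular $W$.

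I do not expect any real obstacle here; the only thing to be a little careful about is making explicit that irreducibility as a weak $V$-module upgrades (via Lemma \ref{lie-verteks}) to irreducibility, hence cyclicity, as a $U(\mathfrak g(V))$-module, so that the countable dimension of the enveloping algebra passes to $W$. One could alternatively phrase the argument purely in vertex-algebra language — $W$ is spanned by vectors $a^{(1)}_{n_1}\cdots a^{(k)}_{n_k} w$ with $a^{(j)}$ ranging over a countable basis of $V$ and $n_j \in \Z$ — but routing through $\mathfrak g(V)$ and PBW is cleanest and matches the tools already set up in the previous section.
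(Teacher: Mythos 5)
Your argument is correct. It differs mildly from the paper's: the paper takes the same starting point (a single cyclic vector $w$ and the countable dimensionality of $V$), but instead of passing through $U(\mathfrak g(V))$ it invokes Li's density result (\cite[Proposition 6.1]{Li}, see also \cite[Proposition 4.1]{DM}), which says that for an irreducible weak module one already has $W=V.w=\mbox{Span}_{\C}\{u_nw \mid u\in V,\ n\in\Z\}$, a countable spanning set of \emph{single-mode} vectors. You instead upgrade irreducibility to irreducibility over $\mathcal A=U(\mathfrak g(V))$ via Lemma \ref{lie-verteks}, so $W=\mathcal A.w$, and then count: $\mathfrak g(V)$ is a quotient of the countably dimensional space $V\otimes\C[t,t^{-1}]$, so by PBW the algebra $\mathcal A$ is countably dimensional and hence so is its cyclic module $W$. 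Your route buys independence from Li's (nontrivial) single-mode spanning theorem, using only the easier Lie-algebra reformulation already set up in Section 3 plus PBW; the paper's route is shorter on the page and yields the sharper, more explicit spanning set $\{u_nw\}$, at the cost of citing that density result. Either way the conclusion is the same, and your write-up has no gaps worth flagging.
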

\begin{proof}
  Note that the vertex operator algebra $V$ is countably dimensional.  Take any $w \in W$. Then  by \cite[Proposition 6.1]{Li} (see also \cite[Proposition 4.1]{DM}), 
  $$ W= V. w = \mbox{Span}_{\C} \{ u_n w \ \vert \ u \in V, n\in {\Z} \}, $$
  which implies that $W$ is also countably dimensional.

\end{proof}

Assume that $g$ is an automorphism of arbitrary order $p$. Then we have the decomposition (\ref{decomp-p-general}).

\begin{theorem}  \label{reducibility} Assume that $g$ is an automorphism of $V$ of finite order $p$ as above. Assume that $W$ is an irreducible weak $V$--module    such that $W \circ g \cong W$.
Then $W$ is completely reducible weak  $V^0$--module such that 
\begin{itemize}
\item[(1)] $W = \bigoplus_{i=0} ^p  W^i $, $ V^i.W^j \subset W^{i + j  \ \mbox{mod} (p)} $, where    $W^j$, $j=1, \dots, p$  are eigenspaces of certain linear isomorphism $\Phi(g) : W \rightarrow W$.
\item[(2)] Each $W^i$ is an irreducible weak $V^{0}$--module.
\item[(3)]  The modules $W^i$, $i=0, \dots p-1$,  are non-isomorphic as weak $V^{0}$--modules.
\end{itemize}
\end{theorem}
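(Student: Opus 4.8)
The plan is to mimic the classical Dong--Mason argument, but to replace the use of Zhu's algebra with the Lie-algebra/associative-algebra formalism of Section~3 together with the general Schur's Lemma (Lemma~\ref{schur}), so that everything goes through for weak modules. Since $W\circ g\cong W$, by Lemma~\ref{countably} and Lemma~\ref{schur} (applied to the associative algebra $\mathcal A = U(\mathfrak g(V))$, for which $W$ is irreducible by Lemma~\ref{lie-verteks}) there is, up to a nonzero scalar, a unique $V$-isomorphism $\Phi(g)\colon W\to W\circ g$, i.e.\ a linear isomorphism of $W$ satisfying $\Phi(g)\, Y_W(v,z) = Y_W(gv,z)\,\Phi(g)$ for all $v\in V$. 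First I would check that $\Phi(g)^p$ intertwines $Y_W(v,z)$ with itself (because $g^p=1$), hence by Schur's Lemma $\Phi(g)^p$ is a scalar; rescaling $\Phi(g)$ we may assume $\Phi(g)^p = \mathrm{id}_W$. Therefore $\Phi(g)$ is diagonalizable with eigenvalues among the $p$-th roots of unity, and we set
\[
W^i = \{\, w\in W \ \vert\ \Phi(g) w = \zeta^{i} w \,\},\qquad i=0,1,\dots,p-1,
\]
so that $W = \bigoplus_{i=0}^{p-1} W^i$ as vector spaces (note: the statement's index range $i=0,\dots,p$ should read $i=0,\dots,p-1$).

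Next I would establish the grading compatibility in (1). For $v\in V^j$ we have $gv = \zeta^j v$, so the intertwining relation gives $\Phi(g)\, v_n = \zeta^j\, v_n\,\Phi(g)$ as operators on $W$, for every $n$. Applying this to $w\in W^i$ shows $\Phi(g)(v_n w) = \zeta^{i+j}(v_n w)$, i.e.\ $v_n w \in W^{i+j \bmod p}$; hence $V^j . W^i \subset W^{i+j \bmod p}$, and in particular each $W^i$ is a $V^0$-submodule of $W$. This is the ``weak'' analogue of the computation in \cite{DM}, and it is routine once $\Phi(g)$ is in hand.

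For irreducibility of each $W^i$, part (2), I would argue by contradiction: suppose $0\ne S\subsetneq W^i$ is a proper $V^0$-submodule. Using the grading from (1), the $V$-submodule of $W$ generated by $S$ is $\bigoplus_{j} V^{j}.S \subset \bigoplus_j W^{i+j}$; since $W$ is $V$-irreducible this sum must be all of $W$, so $V^j.S = W^{i+j \bmod p}$ for every $j$, in particular $V^0.S = W^i$, contradicting $S\subsetneq W^i$. (This also shows every $W^i$ is nonzero: if some $W^i=0$ then $\Phi(g)$ would fail to be a scalar multiple of a genuine order-$p$ operator, or more directly, taking a nonzero $w\in W^k$ we get $v_n w$ spanning all of $W^{k+j}$, forcing each eigenspace to be nonzero.) Finally, for part (3), the non-isomorphism of the $W^i$: here I would use Lemma~\ref{schur} once more. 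If $W^i \cong W^{i'}$ as $V^0$-modules for some $i\ne i'$, one builds from such an isomorphism an intertwiner that, combined with the action of a generic element of $V^j$ (with $j = i'-i$), would force a relation contradicting the uniqueness (up to scalar) of $\Phi(g)$; concretely, one shows that the eigenvalue $\zeta^i$ of $\Phi(g)$ on $W^i$ is an invariant of the $V^0$-module structure, because $\Phi(g)$ can be recovered intrinsically from the $V$-module structure and the grading. I expect the genuine obstacle to be exactly this last point --- verifying carefully that the decomposition $W = \bigoplus W^i$ is canonical and that $i\mapsto W^i$ is injective on isomorphism classes --- since it is where one must combine the countable-dimension Schur argument of Lemma~\ref{schur} with the orbifold grading rather than appeal to finite-dimensional representation theory as in \cite{DM,DY}.
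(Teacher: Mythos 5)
Your construction of $\Phi(g)$ (rescaling a $V$-isomorphism $W\to W\circ g$ so that $\Phi(g)^p=\mathrm{id}$ via Lemmas \ref{countably} and \ref{schur}), the grading relation $V^j.W^i\subset W^{i+j \bmod p}$, and the irreducibility argument for each $W^i$ in part (2) are all correct and follow essentially the same route as the paper (the paper takes $W^j=V^j.w$ for a fixed eigenvector $w$, which coincides with the eigenspace description you use). A minor quibble: your parenthetical argument that every eigenspace is nonzero only shows $W^{k+j}=V^j.w$, not that $V^j.w\neq 0$, but the paper is no more explicit on this point.

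The genuine gap is part (3), and you have in effect flagged it yourself: the claim that ``the eigenvalue $\zeta^i$ of $\Phi(g)$ on $W^i$ is an invariant of the $V^0$-module structure'' is exactly what has to be proved, and no argument is given for it. A hypothetical $V^0$-isomorphism $p\colon W^i\to W^j$ has no a priori relation to $\Phi(g)$, which acts by a scalar on each summand anyway, so ``uniqueness of $\Phi(g)$ up to scalar'' cannot be invoked directly. The missing idea (following \cite{DM,DY}, and this is what the paper does) is to transport the problem to the doubled module: fix $0\neq w\in W^i$ and consider the $V$-submodule $\mathcal{U}=V.(w,p(w))\subset W\oplus W$. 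Using the grading from (1) one checks that $\mathcal{U}$ does not contain $W^i\oplus W^i$, hence is proper; since $W\oplus W$ has finite length over $U(\mathfrak{g}(V))$, the Jordan--H\"older theorem forces $\mathcal{U}\cong W$, so both coordinate projections $\mathcal{U}\to W$ are $V$-isomorphisms and their composite $u_n w\mapsto u_n p(w)$ is a $V$-automorphism of $W$. Only now does the countable-dimension Schur Lemma \ref{schur} apply: this automorphism is a scalar $a\,\mathrm{Id}$, whence $p(w)=aw\in W^i$, contradicting $p(w)\in W^j$ with $j\neq i$. Without some argument of this kind (producing an actual $V$-intertwiner from the hypothetical $V^0$-isomorphism), your proof of (3) does not go through.
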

\begin{proof}
By Lemma \ref{countably},  $W$ is countably dimensional. Let $f :  W \rightarrow W \circ g$ be a $V$--isomorphism . Then we have
$$ f(u_n w ) = (g u)_n f(w) \quad \forall u \in V, \ w \in W. $$
Applying $f$ $p$-times, we get
$$ f^p (u_n w)  = (g^p u)_n f^p (w) = u_n f^p (w). $$
Therefore $f^p$ is a $V$--endomorphism. Applying Schur's  Lemma for the associative algebra $A = U(\mathfrak g(V))$, we get
that $f^p = a \mbox{Id}_{W}$, where $a$ is a non-zero constant. By rescaling $f$ one gets  an isomorphism $\Phi(g) :  W \rightarrow W\circ g$ such that $\Phi(g)  ^p = \mbox{Id}$. Next we  consider $\Phi(g)  $ as a linear operator on $W$ with the property
$\Phi(g)  ^p  = \mbox{Id}_{W}$. 

  That means $W$ is a $\langle g\rangle$--module and there is $0 \leq j\leq p-1$ and a vector $0\neq w^j\in W$ such that $ \Phi(g)   (w_j) = \zeta^{j} w_j$. 
Clearly, for any  $x\in V^i. w_j= \mbox{Span}_{\C}\{v_n w_j|v\in V^j, n\in\mathbb Z\}$ and $0 \leq i\leq p-1$, we have   $\Phi(g)   (x) = \zeta^{i+j} x$. Without loss, we may assume there is a $0\neq w\in W$ such that $\Phi(g)   (w)=w$. 
 
Now define
$ W^j = V^j. w= \mbox{Span}_{\C}\{v_n w|v\in V^j, n\in\mathbb Z\}$. Then 
\begin{itemize}
\item  $\Phi(g)   (w_j) = \zeta^{j}  w_j$  for each $w_j \in W^j$.  
\item $\Phi(g)   (u_n w_j ) = \zeta^{j}   g(u)_n  w_j  =  \zeta^{i + j}   u_n  w_j $ for $u \in V^i$, $w_j \in W^j$.  
\end{itemize}
This implies that $W = \bigoplus_{i=0} ^p  W^i $, $ V^i.W^j \subset W^{i + j  \ \mbox{mod} (p)} $ and (1) holds.

Assertion (2)  follows from (1).  

Let  $0 \neq U  \neq  W^j$   be a  proper $V^{0}$--submodule  of   $W^j$  and consider
the $V$--submodule $X= V.U$.
Then   $$ X= V^0.U  + V^1.U + ... +  V^{p-1}.U$$
Since  $U$ is a proper  $V^{0}$--submodule of $W^j$, then $ V^i.U \subset  W^{i+j}$ implies that  $X$ is a proper $V$--submodule of $W$.
This is impossible since $W$ is a simple  $V$--module.
Hence $ W ^j$ is irreducible $V^0$--module for each $j$.

   Proof of assertion (3) is    completely analogous to that of \cite[Theorem 5.1]{DM} and it uses Lemma \ref{schur}. For completeness, we shall include it.
 
Suppose    we have a $V^{0}$--isomorphism   $p: W^ i  \to W^j $, $i\neq j$.
Take a nonzero $ w \in W^i$  and consider the following  $V$--submodule $\mathcal U$   of  $W\oplus W$
$$\mathcal U = V.  (w,  p(w))= \mbox{Span}_{\C} \{ (v_n w, v_n p(w) ) \ \vert \ v \in V \}.  $$ 
 Then   $ W^i  \oplus W^i $ is not in $\mathcal U$ and hence $\mathcal U$  is a proper
submodule of  $W\oplus W$. 
Since $W \oplus W$ is a $U(\frak g(V))$--module of finite length, the Jordan H\" older theorem can be applied.  
Comparing filtrations
$$ (0) \rightarrow W \rightarrow W\oplus W, \quad  (0) \rightarrow \mathcal U \rightarrow W\oplus W,$$
and using simplicity of $W$, we get that  $\mathcal U  \cong   W$ as  $U(\frak g(V))$--modules. This implies that $\mathcal U  \cong   W$ as $V$--modules.

Then both projection maps  from $\mathcal U  \to  W \oplus ( 0 ) $  and $\mathcal U \to ( 0 )  \oplus W$
are  $V$--isomorphisms.
Hence the map   $$ \Phi :   u_n w \mapsto   u_n p(w), \quad (u \in V)  $$ is also an isomorphism. Using Schur's lemma we get $\Phi = a \mbox{Id}$ for $a \in {\C}$,  which implies that
$p(w) = a w \in W^i$. This implies $i=j$.  A contradiction

\end{proof}

\section{Whittaker modules: some structural results}

First we recall some basic notions from \cite{BM}.

\begin{definition} For a Lie algebra $\mathfrak{n},$ define ideals
$\mathfrak{n}_{0}:=\mathfrak{n}$ and $\mathfrak{n}_{i}:=\left[\mathfrak{n}_{i-1},\mathfrak{n}\right],i>0$.
Then we have a sequence of ideals
\[
\mathfrak{n}=\mathfrak{n}_{0}\supset\mathfrak{n}_{1}\supset\mathfrak{n}_{2}\supset\cdots.
\]

We say that $\mathfrak{n}$ is \emph{quasi-nilpotent }if $\cap_{i=0}^{\infty}\mathfrak{n}_{i}=0$.
Obviously, any nilpotent Lie algebra is quasi-nilpotent.

\end{definition}

\begin{definition} Let $\mathfrak{g}$ be a nonzero complex Lie algebra
and let $\mathfrak{n}$ be a subalgebra of $\mathfrak{g}$. If $M$
is a $\mathfrak{g}$--module, then we say that the action of $\mathfrak{n}$
on $M$ is \emph{locally finite} provided that $U\left(\mathfrak{n}\right)v$
is finite dimensional for all $v\in M$. Let ${\mathcal{W}h}(\mathfrak{g},\mathfrak{n})$
denote the  full subcategory of the category $\mathfrak{g}$--Mod of
all $\mathfrak{g}$--modules, which consists of all $\mathcal{\mathfrak{g}}$--modules,
the action of $\mathfrak{n}$ on which is locally finite.

\end{definition}

Let  $V$ be a vertex algebra. Assume that the Lie algebra $\mathfrak L$ is one of the following two types:
\begin{itemize}
\item[(1)] $\mathcal L = \mathfrak g(V)$, or
\item[(2)] $\mathcal L$ is the Lie algebra of modes of generating fields of the vertex algebra $V$.
\end{itemize}

\begin{remark} In many cases it is possible to replace $\mathfrak g(V)$ with much smaller Lie algebra. For example, this happens in the following cases:
\begin{itemize}
\item If $V$ is  the universal affine vertex algebra $V^k(\g)$, then we can take $\mathfrak L = {\hg}$, where $\hg$ is the affine Lie algebra associated to the simple Lie algebra $\g$ (cf. \cite{ALZ}, \cite{A-2019} for studying Whittaker modules in this case).
\item If $V$ is the Heisenberg vertex algebra $M(1)$, we can take  $\mathfrak L = {\hh}$ (cf. Section \ref{Heisenberg} below).
\item If  $V$ is the Weyl vertex algebra, we can take Lie algebra $\mathfrak L$ such that the Weyl algebra $\widehat {\mathcal A} =U(\mathfrak L)$ (cf. Section \ref{Weyl} below).
\end{itemize}

\end{remark}
Note that by our assumptions on the vertex algebra, every weak $V$\textendash module is a module for the Lie algebra $\mathfrak{L}$. We also assume the following:
\begin{itemize}
\item Let $\mathfrak{n}$ be a nilpotent subalgebra of $\mathfrak L $.
\item Let ${\mathcal{W}h}(\mathfrak{L},\mathfrak{n})$   denote  the full
category of $\mathfrak{L}$\textendash modules $W$ such that $\mathfrak{n}$
acts locally finitely on $W$ (cf. \cite{BM}).
\end{itemize}

\begin{definition} Let $W\in{\mathcal{W}h}(\mathfrak{L},\mathfrak{n})$.
A vector $v\in W$ is called a Whittaker vector provided that $\left\langle v\right\rangle $
is an $\mathfrak{n}$-submodule of $W$. \end{definition}

Let  $\lambda : \mathfrak n \rightarrow {\C}$  be a Lie algebra homomorphism which will be called a {\em Whittaker function}. Let $U_{\lambda}= {\C} u_{\lambda}$ be the $1$--dimensional $\mathfrak n$--module such that $$ x u_{\lambda} = \lambda(x) u_{\lambda} \quad (x \in \mathfrak n). $$
Consider the standard (universal)  Whittaker $\mathfrak{L}$--module
$$ M_{\lambda} = U(\mathfrak L) \otimes_{U(\mathfrak n)} U_{\lambda}. $$

\begin{definition} \label{def-2.4} We say that an irreducible $V$\textendash module
$W$ is of Whittaker type $\lambda$ if $W$ is an irreducible quotient of the standard Whittaker module $M_{\lambda}$.
\end{definition}

\begin{lemma}[cf. \cite{BM}] \label{desc}
Assume that $W$ is an irreducible $V$--module of Whittaker type $\lambda$. Then $$W = \{ w  \in W \  \vert \ \forall  x \in {\mathfrak n}, \quad  (x - \lambda (x) ) ^ k w = 0 \ \mbox{for} \ k >> 0 \}. $$
\end{lemma}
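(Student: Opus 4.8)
The statement to prove is that an irreducible $V$-module $W$ of Whittaker type $\lambda$ decomposes as the generalized eigenspace description
\[
W = \{ w \in W \mid \forall x \in \mathfrak n,\ (x - \lambda(x))^k w = 0 \text{ for } k \gg 0 \}.
\]
Write $W^{\mathrm{gen}}$ for the right-hand side. The plan is to show $W = W^{\mathrm{gen}}$ by first observing that $W^{\mathrm{gen}}$ is nonzero and then that it is an $\mathfrak L$-submodule, so by irreducibility it equals $W$.

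First I would note that $\mathfrak n$ acts locally finitely on $W$: since $W$ is a quotient of $M_\lambda = U(\mathfrak L)\otimes_{U(\mathfrak n)} U_\lambda$, and $U(\mathfrak L)$ is spanned (PBW, with respect to any complement of $\mathfrak n$) by monomials that, acting on the cyclic Whittaker vector $u_\lambda$, produce vectors each generating a finite-dimensional $U(\mathfrak n)$-module — because $\mathfrak n$ is nilpotent and $[\mathfrak n, \cdot]$ only finitely changes the relevant PBW-degree. Thus $W \in \mathcal{W}h(\mathfrak L, \mathfrak n)$. Next, for each fixed $x \in \mathfrak n$ and each $w \in W$, local finiteness means $U(\mathfrak n)w$ is finite-dimensional, hence $x$ acts on it as a finite-dimensional operator; since $\mathfrak n$ is nilpotent, $x - \lambda(x)$ acts nilpotently on the span of the Whittaker vector's orbit, and more generally the twisted action $x \mapsto x - \lambda(x)$ makes every element of $\mathfrak n$ act locally nilpotently — this is where I would invoke the analogue of Engel's theorem / the structure of nilpotent-Lie-algebra modules from \cite{BM}: on the finite-dimensional $U(\mathfrak n)$-module $U(\mathfrak n)u_\lambda$, all of $\mathfrak n$ acts by operators $\lambda(x) + (\text{nilpotent})$, so $u_\lambda \in W^{\mathrm{gen}}$, giving $W^{\mathrm{gen}} \ne 0$.

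Then I would check $W^{\mathrm{gen}}$ is $\mathfrak L$-stable. Take $w \in W^{\mathrm{gen}}$ and $y \in \mathfrak L$; I want $yw \in W^{\mathrm{gen}}$, i.e.\ for each $x \in \mathfrak n$, $(x-\lambda(x))^k(yw) = 0$ for large $k$. Using $(x - \lambda(x)) y = y(x-\lambda(x)) + [x,y]$ and iterating, $(x-\lambda(x))^k (yw)$ is a sum of terms of the form $(\text{element of } U(\mathfrak L))\cdot (x - \lambda(x))^{k-j}(\dots)$ where the inner nested commutators $[\dots[x,y],\dots]$ eventually land in $\cap_i \mathfrak n_i = 0$... — but more carefully, since $\mathfrak n$ is nilpotent, iterated brackets $\operatorname{ad}(x)^m$ applied to anything in $\mathfrak n$ vanish for $m \geq$ nilpotency length; combined with $(x-\lambda(x))^k w = 0$ for $k \gg 0$, a standard Leibniz-rule expansion shows $(x-\lambda(x))^{k+N}(yw)=0$ once $N$ exceeds the nilpotency length of $\mathfrak n$ and $k$ is large enough. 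Hence $W^{\mathrm{gen}}$ is an $\mathfrak L$-submodule containing $u_\lambda \ne 0$; by Lemma~\ref{lie-verteks}, $W$ is an irreducible $\mathfrak L$-module, so $W^{\mathrm{gen}} = W$.

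\textbf{Main obstacle.} The delicate point is the second paragraph: establishing that the \emph{whole} of $\mathfrak n$ (not just a single chosen $x$) acts on each finite-dimensional $U(\mathfrak n)$-orbit by $\lambda + \text{nilpotent}$, which requires the nilpotency of $\mathfrak n$ together with a Lie–Kolchin/Engel-type argument on finite-dimensional modules, exactly as packaged in \cite{BM}. The $\mathfrak L$-stability computation in the third paragraph is routine once one sets up the commutator bookkeeping, so I would cite \cite{BM} for the structural input and carry out the submodule verification explicitly.
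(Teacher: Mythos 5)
Your proof is correct in substance, but it is packaged differently from the paper's. The paper proves the statement first for the universal module $M_\lambda$ itself, via the single identity $(x-\lambda(x))^k\, w_1\otimes u_\lambda=\operatorname{ad}_x^k(w_1)\otimes u_\lambda$ for $w_1\in U(\mathfrak L)$, and then simply observes that the property passes to the quotient $W$; you instead work inside $W$, showing that the generalized Whittaker subspace $W^{\mathrm{gen}}$ is nonzero and $\mathfrak L$-stable and then invoking irreducibility. The key computation is the same in both cases (your binomial expansion $(x-\lambda(x))^k y=\sum_j\binom{k}{j}\operatorname{ad}_x^j(y)(x-\lambda(x))^{k-j}$ is the one-element-at-a-time version of the paper's identity), and both arguments rest on the same implicit input, namely that $\operatorname{ad}_x$, $x\in\mathfrak n$, acts locally nilpotently on $\mathfrak L$ (hence on $U(\mathfrak L)$) --- this is the Whittaker-pair condition of \cite{BM}, and your phrasing ``iterated brackets applied to anything in $\mathfrak n$ vanish'' is not quite what is needed, since $y$ ranges over $\mathfrak L$, not $\mathfrak n$; the paper glosses this point in exactly the same way, so it is not a gap relative to the paper. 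Two smaller remarks: your second paragraph (local finiteness, Engel) is unnecessary --- the base case is simply $(x-\lambda(x))u_\lambda=0$, so the image of $u_\lambda$ lies in $W^{\mathrm{gen}}$; and your appeal to Lemma \ref{lie-verteks} is only literally available when $\mathfrak L=\mathfrak g(V)$, whereas the paper also allows smaller mode algebras (e.g.\ $\hat{\mathfrak h}$ or the Weyl algebra generators). The cleaner finish, which also makes irreducibility superfluous, is to note that $W$ is a quotient of $M_\lambda$, hence cyclic over $U(\mathfrak L)$ on the Whittaker vector, so the $U(\mathfrak L)$-stable subspace $W^{\mathrm{gen}}$ containing that vector is all of $W$; with that adjustment your argument is a fully valid alternative, marginally longer than the paper's descent-from-$M_\lambda$ proof but making the submodule structure of the generalized eigenspace explicit.
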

\begin{proof} 
Let us first prove that
$$ M_{\lambda} =  \{ w  \in M_{\lambda} \  \vert \ \forall  x \in {\mathfrak n}, \quad  (x - \lambda (x) ) ^ k w = 0 \ \mbox{for} \ k >> 0 \}. $$
Take an arbitrary element $w_1 \in  U(\mathfrak{L})$. Since $\mathfrak n$ is a nilpotent subalgebra of $\mathfrak{L}$ ,  for $x \in {\mathfrak n}$, we have
$$ \mbox{ad}_x ^k (w_1) = 0  \quad \mbox{for} \ k >>0 . $$
Next we notice that
$$ ( x-\lambda(x)) w_1 \otimes u_{\lambda} = [x, w_1] \otimes u_{\lambda}$$
which implies that
$$ ( x-\lambda(x)) ^k  w_1 \otimes u_{\lambda} = \mbox{ad}_x ^k (w_1)  \otimes u_{\lambda} = 0 \quad \mbox{for} \ k >> 0.$$
The claim now follows from the fact that $W$ is a quotient of the universal Whittaker module $M_{\lambda}$.
%
\end{proof}

\begin{lemma} \label{inequivalence =000026 cyclic vector }Assume
that $\lambda,\mu:\mathfrak{n}\rightarrow{\C}$ are Whittaker functions such that $\lambda\ne\mu$.
Assume that $W_{\lambda}$ and $W_{\mu}$ are irreducible Whittaker
modules of types $\lambda$ and $\mu$ respectively. Then

(1) $W_{\lambda}$ and $W_{\mu}$ are inequivalent as $V$\textendash modules.

(2) Let $(w_{1},w_{2})\in W_{\lambda}\oplus W_{\mu}$, $w_{1}\ne0,w_{2}\ne0$.
Then
\[
V.(w_{1},w_{2})=W_{\lambda}\oplus W_{\mu}.
\]
\end{lemma}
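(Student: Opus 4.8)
The plan is to prove the two parts in order, using the structural description of Whittaker modules from Lemma~\ref{desc} together with the cyclic-vector Lemma~\ref{1.4}.

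For part (1), I would argue by contradiction. Suppose $W_\lambda \cong W_\mu$ as $V$--modules, hence (by Lemma~\ref{lie-verteks}) as $\mathfrak L$--modules via some isomorphism $\varphi$. By Lemma~\ref{desc}, for every $x \in \mathfrak n$ the operator $x - \lambda(x)$ acts locally nilpotently on $W_\lambda$, and likewise $x - \mu(x)$ acts locally nilpotently on $W_\mu$. Transporting the latter through $\varphi$, the operator $x - \mu(x)$ also acts locally nilpotently on $W_\lambda$. Fix any nonzero $w \in W_\lambda$; then both $(x-\lambda(x))^k w = 0$ and $(x-\mu(x))^m w = 0$ for large $k, m$. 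Since $x-\lambda(x)$ and $x-\mu(x)$ differ only by the scalar $\lambda(x)-\mu(x)$, they commute, so a single vector cannot be annihilated by large powers of both unless $\lambda(x) = \mu(x)$: indeed if $N = \lambda(x)-\mu(x) \ne 0$ then $(x-\mu(x)) = (x-\lambda(x)) + N$, and expanding $((x-\lambda(x)) + N)^m w$ via the binomial theorem, the top term $N^m w$ survives while all others vanish for $m$ large, forcing $w = 0$. Hence $\lambda(x) = \mu(x)$ for all $x \in \mathfrak n$, i.e. $\lambda = \mu$, contradicting the hypothesis. Therefore $W_\lambda \ncong W_\mu$.

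For part (2), with inequivalence of $W_\lambda$ and $W_\mu$ now established, I would invoke Lemma~\ref{1.4} directly. Both $W_\lambda$ and $W_\mu$ are irreducible weak $V$--modules (they are irreducible quotients of standard Whittaker modules, so irreducible by Definition~\ref{def-2.4}), and by part (1) they are non-isomorphic. Lemma~\ref{1.4} with $t = 2$, $L_1 = W_\lambda$, $L_2 = W_\mu$ then says precisely that any vector $(w_1, w_2)$ with $w_1 \ne 0$ and $w_2 \ne 0$ is cyclic in $W_\lambda \oplus W_\mu$; that is, $V.(w_1, w_2) = W_\lambda \oplus W_\mu$, where the action of $V$ is understood through the $U(\mathfrak g(V))$--action. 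One small bookkeeping point: Lemma~\ref{1.4} is stated for $\mathfrak g(V)$, whereas here $\mathfrak L$ may be a smaller Lie algebra; but since $W_\lambda, W_\mu$ are $V$--modules they are $\mathfrak g(V)$--modules, and the $U(\mathfrak g(V))$--submodule generated by $(w_1,w_2)$ is contained in the $V$--submodule, so the conclusion still gives $V.(w_1,w_2) = W_\lambda \oplus W_\mu$.

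The main obstacle is really just the care needed in the commuting-operators argument in part (1): one must be sure the local nilpotence of $x - \mu(x)$ transfers across the isomorphism and then correctly run the binomial expansion on a fixed vector to extract the contradiction. Everything else — the identification of the $\mathfrak L$--module and $V$--module structures, and the reduction of part (2) to Lemma~\ref{1.4} — is essentially formal given the results already in the excerpt.
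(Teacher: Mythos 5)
Your part (2) is fine: after (1), quoting Lemma~\ref{1.4} with $t=2$ immediately gives $V.(w_1,w_2)=W_\lambda\oplus W_\mu$, and this is exactly the shortcut the paper itself records in the remark following the lemma (the paper keeps an independent, explicit proof only in order to exhibit concrete elements of the annihilators $J_i=\mathrm{Ann}(w_i)$, which your argument does not need). Your part (1) also has the right overall shape -- transport the local nilpotence of $x-\mu(x)$ across a putative isomorphism and contradict Lemma~\ref{desc} -- but one step fails as written. Having fixed an arbitrary nonzero $w\in W_\lambda$ with $(x-\lambda(x))^k w=0$ and $N=\lambda(x)-\mu(x)\ne 0$, you expand $(x-\mu(x))^m w=\bigl((x-\lambda(x))+N\bigr)^m w$ and claim that ``the top term $N^m w$ survives while all others vanish for $m$ large.'' That is false: the intermediate terms $\binom{m}{j}N^{m-j}(x-\lambda(x))^j w$ with $1\le j\le k-1$ do not vanish however large $m$ is, so the equation $(x-\mu(x))^m w=0$ only says that $N^m w$ lies in the span of the vectors $(x-\lambda(x))^j w$, $j\ge 1$, and no contradiction (in particular not $w=0$) follows without a further argument. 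This is precisely the delicate point that the paper's own proof of the Claim in part (2) handles by proving linear independence of $w_2,(x-\mu(x))w_2,\dots,(x-\mu(x))^{k_2-1}w_2$ before asserting the binomial sum is nonzero.

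The gap is genuine but easily repaired in either of two ways. (a) Prove that a locally nilpotent operator plus a nonzero scalar acts injectively: take $j_0$ maximal with $(x-\lambda(x))^{j_0}w\ne 0$; then, since everything commutes, $(x-\lambda(x))^{j_0}(x-\mu(x))^m w = N^m (x-\lambda(x))^{j_0}w \ne 0$, so $(x-\mu(x))^m w\ne 0$ for every $m$, contradicting the local nilpotence pulled back from $W_\mu$. (b) Do what the paper does: instead of an arbitrary $w$, apply the isomorphism to the Whittaker vector $w_\lambda$ itself, which satisfies $(x-\lambda(x))f(w_\lambda)=0$ exactly; then the expansion genuinely collapses, $(x-\mu(x))^m f(w_\lambda)=N^m f(w_\lambda)\ne 0$, and Lemma~\ref{desc} applied to $W_\mu$ gives the contradiction at once. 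With either repair your proof of (1) is correct, and together with your reduction of (2) to Lemma~\ref{1.4} it reproduces the lemma along essentially the paper's lines, just with the arbitrary-vector variant requiring the extra injectivity observation.
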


\begin{proof} (1) Assume that $f:W_{\lambda}\rightarrow W_{\mu}$ is
an isomorphism. Then $f(w_{\lambda})$ is a non-trivial Whittaker
vector in $W_{\mu}$ such that
$$( x -\lambda(x) )  f(w_{\lambda}) =0, \quad \forall x\in \mathfrak{n}.  $$
Take $x\in \mathfrak{n}$ such that $\lambda(x) \ne \mu(x)$.
The for every $k >0$ we have
$$( x -\mu(x) ) ^k   f(w_{\lambda}) = (x - \lambda(x) + \lambda(x) -\mu(x)) ^k   f(w_{\lambda})  = (\lambda (x)  -\mu (x) ) ^k f(w_{\lambda})  \ne 0. $$
This contradicts with Lemma  \ref{desc}. So (1) holds.

Now let us prove (2).

Claim: There exist $k>0$ and $x\in{\mathfrak{n}}$ such that $$(x-\lambda(x))^{k}w_{1}=0\quad
\mbox{and}\quad (x-\lambda(x))^{k}w_{2}=z'\ne0.$$
Indeed, take $x\in\mathfrak{n}$ such that $\mu(x)-\lambda(x)=A\ne0$.
Let $k_{1},k_{2}$ be the smallest positive integer such that
\[
(x-\lambda(x))^{k_{1}}w_{1}=0\ \mbox{and}\ \ (x-\mu(x))^{k_{2}}w_{2}=0.
\]
Let $k=\max\{k_{1},k_{2}\}$. We have
\begin{align*}
 & (x-\lambda(x))^{k}w_{2}\\
 & =(x-\mu(x)+A)^{k}w_{2}\\
 & =\sum_{p=0}^{k-1}\left(_{p}^{k}\right)\left(x-\mu\left(x\right)\right)^{p}A^{k-p}w_{2}\\
 & =\sum_{p=0}^{k_{2}-1}\left(_{p}^{k}\right)\left(x-\mu\left(x\right)\right)^{p}A^{k-p}w_{2}\\
 & =A^{k}w_{2}+kA^{k-1}\left(x-\mu\left(x\right)\right)w_{2}+\cdots+\left(_{k_{2}-1}^{k}\right)A^{k-k_{2}+1}\left(x-\mu\left(x\right)\right)^{k_{2}-1}w_{2}.
\end{align*}
Note that $w_{2},\left(x-\mu\left(x\right)\right)w_{2},\cdots,\left(x-\mu\left(x\right)\right)^{k_{2}-1}w_{2}$
are linearly independent. Otherwise, there exist $p_{0},p_{1},\cdots,p_{k_{_{2}}-1}$
such that
\[
p_{0}w_{2}+p_{1}\left(x-\mu\left(x\right)\right)w_{2}+\cdots+ p_{k_{2}-1}\left(x-\mu\left(x\right)\right)^{k_{2}-1}w_{2}=0.
\]
Let $I=\left\{ i=0,1,\cdots,k_{2}-1|p_{i}\not=0\right\} $ and $s=\max I.$
Then
\begin{equation}
\left(x-\mu\left(x\right)\right)^{s}w_{2}=\sum_{i=0}^{s-1}\frac{p_{i}}{p_{s}}\left(x-\mu\left(x\right)\right)^{i}w_{2}.\label{eq1}
\end{equation}
 If $\left(x-\mu\left(x\right)\right)^{k_{2}-s}w_{2},$ $\left(x-\mu\left(x\right)\right)^{k_{2}-s+1}w_{2},$
$\cdots,$$\left(x-\mu\left(x\right)\right)^{k_{2}-1}w_{2}$ are linearly
independent, then $p_{i}=0$, $i=0,1,\cdots,s-1.$ By (\ref{eq1}),
we obtain $\left(x-\mu\left(x\right)\right)^{s}w_{2}=0$ where $s<k_{2}$,
which is a contradiction. So$\left(x-\mu\left(x\right)\right)^{k_{2}-s}w_{2},$
$\left(x-\mu\left(x\right)\right)^{k_{2}-s+1}w_{2},$ $\cdots,$ $\left(x-\mu\left(x\right)\right)^{k_{2}-1}w_{2}$
(at most $k_{2}-1$ terms) are linearly dependent. Repeating similar
argument, we can prove that there exists $q<k_{2}$ such that $\left(x-\mu\left(x\right)\right)^{q}w_{2}=0$,
which is a contradiction. Thus we proved $w_{2},$ $\left(x-\mu\left(x\right)\right)w_{2},\cdots,\left(x-\mu\left(x\right)\right)^{k_{2}-1}w_{2}$
are linearly independent and hence
\[
A^{k}w_{2}+kA^{k-1}\left(x-\mu\left(x\right)\right)w_{2}+\cdots+\left(_{k_{2}-1}^{k}\right)A^{k-k_{2}+1}\left(x-\mu\left(x\right)\right)^{k_{2}-1}w_{2}\not=0.
\]
Now we have
\[
(x-\lambda(x))^{k}(w_{1},w_{2})=(0,z'),\quad z'\in W_{\mu},z'\ne0.
\]
Irreducibility of $W_{\mu}$ now gives that $W_{\mu}\subset V.(w_{1},w_{2})$.
Similarly we prove that $W_{\lambda}\subset V.(w_{1},w_{2})$. So
(2) holds. \end{proof}
\begin{remark}
The assertion (2) is a consequence of (1) and Lemmas \ref{1.2} and \ref{1.4}.  So we could omit its
proof. But since Whittaker modules  are   the main  new examples on which we can apply Theorem
\ref{general}, we think that it is important to keep an independent proof which contains explicit
construction of elements in maximal left ideals $J_i=Ann (w_i)$, $i=1,2$. In particular, we have
elements $ (x-\lambda(x))^{k} \in J_1 \setminus J_2$ which correspond to element $u_i$ (for $i=2$)
constructed in Lemma \ref{1.2} by using  abstract arguments.
\end{remark}

We can easily generalize the previous lemma:
\begin{lemma} \label{inequivalence =general case}Assume
that $\lambda_1,\cdots, \lambda_n :\mathfrak{n}\rightarrow{\C}$  are Whittaker functions such that $\lambda_i \ne\lambda_j$ for $i\ne j$.
Assume that $W_{\lambda_i}$, $i =1, \dots, n$  are irreducible Whittaker
modules of types $\lambda_i$. Then
\begin{itemize}
\item[(1)] All  $W_{\lambda_i}$  are  inequivalent as $V$\textendash modules.

\item[(2)] Let $ {\bf w }= (w_{1},w_{2}, \cdots w_n)\in  W_{\lambda_1}\oplus \cdots  \oplus W_{\lambda_n} $, where $0\ne w_{i}\in W_{\lambda_i} $.
Then
\[
V. {\bf w} =W_{\lambda_1}\oplus \cdots  \oplus W_{\lambda_n}.
\]
\end{itemize}
\end{lemma}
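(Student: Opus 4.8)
The plan is to prove Lemma \ref{inequivalence =general case} by reducing it to the two-module case already established in Lemma \ref{inequivalence =000026 cyclic vector }, together with the abstract cyclicity result of Lemma \ref{1.4}. Part (1) is immediate: if $W_{\lambda_i}\cong W_{\lambda_j}$ as $V$--modules for some $i\ne j$, then since $\lambda_i\ne\lambda_j$, this directly contradicts part (1) of Lemma \ref{inequivalence =000026 cyclic vector }. Hence all the $W_{\lambda_i}$ are pairwise inequivalent. For part (2), the cleanest route is to observe that each $W_{\lambda_i}$ is an irreducible weak $V$--module (being an irreducible quotient of a standard Whittaker module), and these are pairwise non-isomorphic by part (1); Lemma \ref{1.4} then says precisely that for $0\ne w_i\in W_{\lambda_i}$ the vector $\mathbf{w}=(w_1,\dots,w_n)$ is cyclic in $\bigoplus_{i=1}^n W_{\lambda_i}$, i.e. $V.\mathbf{w}=W_{\lambda_1}\oplus\cdots\oplus W_{\lambda_n}$. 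This is already a complete argument.

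However, in keeping with the philosophy expressed in the Remark following Lemma \ref{inequivalence =000026 cyclic vector }, I would also give a direct, constructive proof that exhibits the relevant elements of the annihilator ideals explicitly, by induction on $n$. The base case $n=2$ is Lemma \ref{inequivalence =000026 cyclic vector }(2). For the inductive step, suppose the claim holds for $n-1$. Fix $\mathbf{w}=(w_1,\dots,w_n)$ with all $w_i\ne0$. For each $i<n$, since $\lambda_i\ne\lambda_n$, the Claim inside the proof of Lemma \ref{inequivalence =000026 cyclic vector } produces $k_i>0$ and $x_i\in\mathfrak n$ with $(x_i-\lambda_n(x_i))^{k_i}w_n=0$ while $(x_i-\lambda_n(x_i))^{k_i}w_i=z_i'\ne0$ in $W_{\lambda_i}$. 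Applying the product $\prod_{i=1}^{n-1}(x_i-\lambda_n(x_i))^{k_i}$ to $\mathbf{w}$ kills every component except possibly the last and we obtain, after checking it is nonzero, a vector of the form $(0,\dots,0,z')$ with $0\ne z'\in W_{\lambda_n}$; irreducibility of $W_{\lambda_n}$ as a $V$--module gives $W_{\lambda_n}\subset V.\mathbf{w}$. Symmetrically (grouping the last $n-1$ indices and isolating the first, or any single index) one gets $W_{\lambda_j}\subset V.\mathbf{w}$ for every $j$, whence $V.\mathbf{w}=\bigoplus_{i=1}^n W_{\lambda_i}$.

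The main obstacle in the constructive approach is verifying that the vector obtained after applying $\prod_{i=1}^{n-1}(x_i-\lambda_n(x_i))^{k_i}$ to $w_n$-killing operators does not accidentally annihilate the surviving component $w_n$ (or, in the symmetric step, the component $w_j$ one wants to isolate). The operators $(x_i-\lambda_n(x_i))$ need not commute, and there is no reason a priori that a single operator of the form built in Lemma \ref{inequivalence =000026 cyclic vector } simultaneously preserves $w_n$; one must instead choose, for the isolation of index $j$, a single $x\in\mathfrak n$ with $\lambda_j(x)$ distinct from all $\lambda_i(x)$, $i\ne j$ — such an $x$ exists because the $\lambda_i$ are distinct linear functionals and $\mathfrak n$ is over an infinite field, so the complement of finitely many hyperplanes is nonempty — and then argue exactly as in the $n=2$ case that a high enough power of $(x-\lambda_j(x))$ kills every $w_i$ with $i\ne j$ while mapping $w_j$ to a nonzero vector (using the linear independence of $w_j,(x-\lambda_j(x))w_j,\dots,(x-\lambda_j(x))^{m-1}w_j$ established there). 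With that uniform choice of $x$ the inductive bookkeeping disappears and the argument becomes a direct repetition of the $n=2$ proof, one index at a time. For brevity in the paper I would state part (1) and its use, then note that part (2) follows from Lemma \ref{1.4} (or from the explicit single-operator argument just sketched), and leave the routine details to the reader.
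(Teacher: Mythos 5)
Your main argument is correct and is essentially the paper's: part (1) is the pairwise application of the two-module lemma, and part (2) then follows from Lemma \ref{1.4} (equivalently, Lemma \ref{1.2}); indeed the paper states the general lemma without further proof, the remark preceding it having already observed that assertion (2) is a consequence of (1) together with Lemmas \ref{1.2} and \ref{1.4}.

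The supplementary constructive argument, however, is not correct as written, and since you offer it as a possible substitute it needs repair. You claim that, for $x\in\mathfrak{n}$ with $\lambda_j(x)\ne\lambda_i(x)$ for all $i\ne j$, a high power of $x-\lambda_j(x)$ kills every $w_i$ with $i\ne j$ while mapping $w_j$ to a nonzero vector. The situation is exactly the reverse: by Lemma \ref{desc} the operator $x-\lambda_j(x)$ acts locally nilpotently on $W_{\lambda_j}$, so a sufficiently high power annihilates $w_j$; whereas on the finite-dimensional $x$-stable span of the vectors $(x-\lambda_i(x))^m w_i$, $m\ge 0$, inside $W_{\lambda_i}$, the operator $x-\lambda_j(x)=(x-\lambda_i(x))+(\lambda_i(x)-\lambda_j(x))$ is nilpotent plus a nonzero scalar, hence invertible, so no power of it kills $w_i$. (The same reversal occurs in your inductive sketch: each factor of $\prod_{i<n}(x_i-\lambda_n(x_i))^{k_i}$ annihilates the last component $w_n$, so the product cannot produce a vector of the form $(0,\dots,0,z')$ with $z'\ne 0$.) The repair is straightforward once you have chosen, as you do, a single $x$ outside the finitely many proper subspaces $\ker(\lambda_j-\lambda_i)$: to isolate the $j$-th component apply $\prod_{i\ne j}(x-\lambda_i(x))^{k_i}$ with the $k_i$ large; the factors commute, being polynomials in the one element $x$, the $i$-th factor kills $w_i$, and the whole product is injective on the span of $(x-\lambda_j(x))^m w_j$ because there $x$ has the single generalized eigenvalue $\lambda_j(x)$, which differs from every $\lambda_i(x)$. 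With that correction the explicit argument goes through; but note it is only a supplement, since your proof via Lemma \ref{1.4} is already complete.
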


\begin{theorem} \label{general-whittaker}
Assume that $W$ is a $V$\textendash module in the Whittaker category
${\mathcal{W}h}(L,\mathfrak{n})$ as before. Assume also that $W_{i}= W\circ g^{i} $
has the Whittaker function $\lambda^{(i)}=\mathfrak{n}\rightarrow{\C}$
and that all $\lambda^{(i)}$ are distinct.  Then $W$ is an irreducible $V^0$--module.
\end{theorem}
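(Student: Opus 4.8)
The plan is to combine Theorem~\ref{general} with the structural results on Whittaker modules just established, so the argument is short. First I would observe that the hypotheses guarantee, via Lemma~\ref{desc}, that each $W_i = W\circ g^i$ is an irreducible $V$--module of Whittaker type $\lambda^{(i)}$ (irreducibility of $W_i$ follows because $W$ is irreducible and the map $w \mapsto w$ is a $\C$--linear isomorphism intertwining the $V$--action on $W$ with the $V$--action on $W\circ g^i$ up to the automorphism $g^i$, so submodules of $W_i$ correspond to submodules of $W$). Thus the $W_i$ satisfy the hypotheses of Lemma~\ref{inequivalence =general case}.

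Next I would invoke Lemma~\ref{inequivalence =general case}(1) to conclude that $W_0, W_1, \dots, W_{p-1}$ are mutually inequivalent as $V$--modules; in particular $W\circ g^i \ncong W$ for all $i \ne 0$, so the hypothesis of Theorem~\ref{general} is met. At this point one route is simply to apply Theorem~\ref{general} directly and be done. The more self-contained route, which I would spell out, is to apply Lemma~\ref{inequivalence =general case}(2): for every $w \in W$ with $w \ne 0$, the diagonal vector $(w, w, \dots, w) \in \mathcal M = \bigoplus_{i=0}^{p-1} W_i$ is cyclic, since it has all components nonzero. This is precisely the cyclicity hypothesis of Lemma~\ref{general-criterion}, whose conclusion is that $W$ is an irreducible weak $V^0$--module. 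Since $V^0 = V^{\langle g\rangle}$, this finishes the proof.

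There is essentially no hard step here: the theorem is a packaging of Theorem~\ref{general} (equivalently Lemmas~\ref{general-criterion} and~\ref{1.4}) together with the Whittaker-specific inequivalence result Lemma~\ref{inequivalence =general case}, all of which are available. The only point requiring a moment's care is the bookkeeping that $W\circ g^i$ is again an irreducible Whittaker module and that its Whittaker function is the stated $\lambda^{(i)}$ — but this is built into the statement of the theorem ("$W_i = W\circ g^i$ has the Whittaker function $\lambda^{(i)}$"), so it can be taken as given. I would therefore present the proof in three lines: the $W_i$ are irreducible Whittaker modules with distinct Whittaker functions; by Lemma~\ref{inequivalence =general case} every diagonal vector $(w,\dots,w)$ is cyclic in $\mathcal M$ and the $W_i$ are pairwise inequivalent; hence Lemma~\ref{general-criterion} (or directly Theorem~\ref{general}) yields that $W$ is irreducible over $V^0$.
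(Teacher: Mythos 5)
Your proposal is correct and follows essentially the same route as the paper: the authors also deduce the theorem directly from Lemma \ref{inequivalence =general case} (giving pairwise inequivalence and cyclicity of the diagonal vectors) combined with the criterion of Lemma \ref{general-criterion}. Your extra remarks on the irreducibility of each $W\circ g^{i}$ and the option of quoting Theorem \ref{general} instead are harmless elaborations of the same argument.
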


\begin{proof}
The proof follows immediately from Lemma \ref{inequivalence =general case} and Theorem \ref{general-criterion}.
\end{proof}

\section{Example: Heisenberg vertex algebra}
\label{Heisenberg}

First we recall the definition of the Heisenberg  Lie algebra $\hat{\mathfrak{h}}$. 
Let
$\mathfrak{h}$ be complex  $\ell$-dimensional vector spaces with respect to the  non-degenerate bilinear form $(\cdot, \cdot)$. Fix
an orthonormal basis $\{h_{1},h_{2},\ldots,h_{\ell}\}$ with respect to  form  $(\cdot, \cdot)$.
 The Heisenberg  Lie algebra $\hat{\mathfrak{h}}$  is defined as
\[
\hat{\mathfrak{h}}=\mathfrak{h}\otimes\mathbb{C}\left[t,t^{-1}\right]\oplus\mathbb{C}K
\]
with the commutator relations
\[
[K,\hat{\mathfrak{h}}]=0,
\quad \text{ and } \quad [a(m),b(n)]=m\delta_{m+n,0}(a,b)K
\]
for $a,b\in\mathfrak{h}$, $m,n\in\mathbb{Z}$ and $a(n)=a\otimes t^{n}$.
We identify $\mathfrak{h}$ with its dual space $\mathfrak{h}^{*}$ by
the form $(\cdot,\cdot)$. Let $\mathbb{C}e^{0}$ be the one-dimensional
module over the Lie algebra $\hat{\mathfrak{h}}$ with action given
by
\[
h(n)e^{0}=0,\quad\forall h\in\mathfrak{h},\,n\ge0;\qquad Ke^{0}=e^{0}.
\]
Define the vector space $M(1)$ by
\begin{equation}
M(1)=U(\hat{\mathfrak{h}})\otimes_{U(\mathfrak{h}\otimes\mathbb{C}[t]\oplus\mathbb{C}K)}\mathbb{C}e^{0}.
\end{equation}
On $M(1)$ define the state-field correspondence by
\begin{equation}
Y(a^{(1)}(n_{1})\cdots a^{(r)}(n_{r})e^{0},z)=a^{(1)}(z)_{n_{1}}\cdots a^{(r)}(z)_{n_{r}}\text{Id}_{M(1)}
\end{equation}
for $a^{(i)}\in\mathfrak{h}$ and $n_{i}\in\mathbb{Z}$. The vacuum
vector is $\boldsymbol{1}=e^{0}$ and the conformal vector is given
by

\[
\omega=\frac{1}{2}\sum_{i=1}^{\ell}h_{i}(-1)^{2}\boldsymbol{1}.
\]
In particular,
\[
Y(\omega,z)=L(z)=\sum_{n\in\mathbb{Z}}L(n) z^{-n-2},\qquad L(n)=\frac{1}{2}\sum_{m\in\mathbb{Z}}{}_{\circ}^{\circ}h_{i}(-m)h_{i}(m+n){}_{\circ}^{\circ}.
\]

Then $\left(M\left(1\right),Y,\boldsymbol{1},\omega\right)$ is a
vertex operator algebra (see \cite{LL} for details).
 Now consider
the order two automorphism $\theta$ of the vector space $M(1)$ given
by
\[
\theta\big(h_{i_{1}}(-n_{1})h_{i_{2}}(-n_{2})\cdots h_{i_{k}}(-n_{k})\boldsymbol{1}\big)=(-1)^{k}h_{i_{1}}(-n_{1})h_{i_{2}}(-n_{2})\cdots h_{i_{k}}(-n_{k})\boldsymbol{1}
\]
for $i_{j}\in\{1,2,\ldots,\ell\}$ for all $j$ and $n_{1}\ge n_{2}\ge\cdots\ge n_{k}>0$.
Let $M(1)^{+}$ be the corresponding subspace of fixed-points with
respect to $\theta$:
\begin{equation}
M(1)^{+}=\big\{ v\in M(1)\mid\theta(v)=v\big\}.
\end{equation}
Then $M\left(1\right)^{+}$ is a vertex operator algebra and its structure
and representation are well studied (cf. \cite{DN1,A1,DN3,ADL}).

Let $\boldsymbol{\lambda}=(\lambda_{0},\lambda_{1},\ldots,\lambda_{r},0,\cdots)$
be a sequence of elements of $\mathfrak{h}$ with at least one nonzero
entry, and $\lambda_{n}=0$ for $n\gg0$. Let $\mathbb{C}e^{\boldsymbol{\lambda}}$
be the one-dimensional module over the Lie algebra $\mathfrak{h}\otimes\mathbb{C}[t]\oplus\mathbb{C}K$
with action given by
\[
h(n)e^{\boldsymbol{\lambda}}=(h,\lambda_{n})e^{\boldsymbol{\lambda}},\;h\in\mathfrak{h},\,n\ge0;\quad Ke^{\boldsymbol{\lambda}}=e^{\boldsymbol{\lambda}}.
\]
Consider the corresponding induced $U(\widehat{\mathfrak{h}})$-module
\[
M(1,\boldsymbol{\lambda})=U(\hat{\mathfrak{h}})\otimes_{U(\mathfrak{h}\otimes\mathbb{C}[t]\oplus\mathbb{C}K)}\mathbb{C}{e^{\boldsymbol{\lambda}}}.
\]

Let $\mathfrak{L}=\hat{\mathfrak{h}}$ and $\mathfrak{n}=\mathfrak{h}\otimes\mathbb{C}\left[t\right]\oplus\mathbb{C}K$,
then it is clear that $\mathfrak{n}$ is a nilpotent subalgebra of
$\mathfrak{L}$ and hence $M(1,\boldsymbol{\lambda})$ is the standard
(universal) Whittaker $\mathfrak{L}$-module $U\left(\mathfrak{L}\right)\otimes_{U\left(\mathfrak{n}\right)}$$\mathbb{C}e^{\boldsymbol{\lambda}}$.
Define the Whittaker function $\Lambda:\mathfrak{n}\to\mathbb{C}$
by
\[
\Lambda\left(h\left(n\right)\right)=\left(h,\lambda_{n}\right),n=0,1,\cdots,r;\ \Lambda\left(h\left(k\right)\right)=0,\forall k>r.
\]
Then we see that $\mathbb{C}e^{\boldsymbol{\lambda}}$ is a 1-dimensional
$\mathfrak{n}$-module such that $xe^{\boldsymbol{\text{\ensuremath{\lambda}}}}=\Lambda\left(x\right)e^{\boldsymbol{\lambda}}$
for any $x\in\mathfrak{n}$. By Definition \ref{def-2.4}, $M(1,\boldsymbol{\lambda})\in\mathcal{W}h$$\left(\mathfrak{L},\mathfrak{n}\right)$
is an irreducible \emph{Whittaker module }for $M\left(1\right)$ of
Whittaker type \emph{$\boldsymbol{\Lambda}$. }

Now we see that $ M\left(1,\boldsymbol{\lambda}\right)\circ\theta$
is a Whittaker module for $M\left(1\right)$ with type $-\Lambda$.
By Theorem \ref{general-whittaker}, $M\left(1,\boldsymbol{\lambda}\right)$
is irreducible as Whittaker module for $M\left(1\right)^{+}$. This
gives another proof of Theorem 6.1 in \cite{HY}.

\subsection{On cyclic orbifolds of $M(1)$}
The orbifolds of $M(1)$ were studied by A. Linshaw in \cite{L} using invariant theory. We can now prove irreducibility of certain Whittaker modules for $M(1)$ for   cyclic orbifolds.

Let $O(\ell)$ be orthogonal group. It acts naturally on the vector space ${\h}$ by preserving form $(\cdot, \cdot)$:
$$ (g h, g h') = (h, h') \quad \forall h, h' \in {\h}, \ g \in O(\ell). $$

 The action of $O(\ell)$ on ${\h}$ can be uniquely extended to the action on the vertex operator algebra $M(1)$. Moreover, $O(\ell)$ is the full automorphism group of $M(1)$. 
 Let $ \bm{\lambda} : \mathfrak n \rightarrow {\C}$ be a Whittaker function. 
The action of $O(\ell) $ on $M(1,\bm{\lambda})$ is given by
\[
M(1,\bm{\lambda})\circ g=M(1,\bm{\lambda}\circ g),
\]
$$
  ( \bm{\lambda}\circ g ) (h(n))  =    \bm{\lambda}   (g h)(n) , \quad g \in  O(\ell), \ h\in {\h}, \ n \ge 0. 
$$
It is  important  to consider orbifolds of certain finite subgroups of $O(\ell)$.  A particularly interesting subgroup of $O(\ell)$  is the symmetric group $S_{\ell}$ of $\ell$ letters. A detailed example of
orbifold $M(1) ^{S_3}$, in the case of rank three Heisenberg algebra,  were presented in a recent
paper by A. Milas, M. Penn and  H. Shao \cite{MPS}.

Let  $h_1, \dots, h_{\ell}$ be  the basis of $\mathfrak h$  as above.   Let $ \bm{\lambda} : \mathfrak n \rightarrow {\C}$ be a Whittaker function. Then
 $$ \bm{\lambda} = (\lambda^ 1, \dots, \lambda ^{\ell}), \quad \lambda ^i = ( \lambda (h_i(0)),   \lambda (h_i(1)), \cdots ).$$
The action of $S_{\ell}$ on $M(1,\bm{\lambda})$ is given by
\[
M(1,\bm{\lambda})\circ g=M(1,\bm{\lambda}\circ g),
\]
\[
\bm{\lambda}\circ g=(\lambda^{g(1)},\dots,\lambda^{g(\ell)})\quad\forall g\in S_{\ell}.
\]

\begin{proposition}

 \begin{itemize}
 
 \item[(1)]  Assume that $g\in O(\ell) $ is of finite order such that $\bm{\lambda}\circ g^{i}\ne\bm{\lambda}$ for all $i$. Then $M(1,\boldsymbol{\lambda})$ is an irreducible $M(1)^{\left\langle g\right\rangle}$--module.
\item[(2)]   Assume that $\bm{\lambda}\circ\sigma\ne\bm{\lambda}$ for any
$2$\textendash cycle $\sigma\in S_{\ell}$. Then $M(1,\boldsymbol{\lambda})$ is an irreducible $M(1)^{\left\langle g\right\rangle}$--module for any $g\in S_{\ell}$.
\end{itemize}
\end{proposition}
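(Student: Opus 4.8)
The plan is to obtain both parts as consequences of Theorem~\ref{general-whittaker}. As recalled just above the statement, with $\mathfrak{L}=\hat{\mathfrak{h}}$ and $\mathfrak{n}=\mathfrak{h}\otimes\mathbb{C}[t]\oplus\mathbb{C}K$ the module $M(1,\boldsymbol{\lambda})$ is an irreducible weak $M(1)$--module lying in $\mathcal{W}h(\mathfrak{L},\mathfrak{n})$, of Whittaker type $\Lambda$. The first thing to verify is that twisting behaves well: every $g\in O(\ell)$ acts on $\hat{\mathfrak{h}}$ preserving the $\mathbb{C}[t,t^{-1}]$--grading and fixing $K$, hence normalizes $\mathfrak{n}$, so $M(1,\boldsymbol{\lambda})\circ g^{i}=M(1,\boldsymbol{\lambda}\circ g^{i})$ is again an irreducible object of $\mathcal{W}h(\mathfrak{L},\mathfrak{n})$ (irreducibility is inherited since $\boldsymbol{\lambda}\circ g^{i}$ has a nonzero entry whenever $\boldsymbol{\lambda}$ does, $g$ being invertible) with Whittaker function $\Lambda^{(i)}=\Lambda\circ g^{i}$.

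For part (1), since the $g$--action on Whittaker functions is invertible, the hypothesis $\boldsymbol{\lambda}\circ g^{i}\ne\boldsymbol{\lambda}$ for all $i$ not divisible by $p=\mathrm{ord}(g)$ is equivalent to $\boldsymbol{\lambda}\circ g^{i}\ne\boldsymbol{\lambda}\circ g^{j}$ for $0\le i<j\le p-1$, i.e.\ to the functions $\Lambda^{(0)},\dots,\Lambda^{(p-1)}$ being pairwise distinct. Theorem~\ref{general-whittaker} then applies directly and yields that $M(1,\boldsymbol{\lambda})$ is an irreducible weak module for $M(1)^{0}=M(1)^{\langle g\rangle}$.

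For part (2), I would reduce to part (1) by showing that the stabilizer $\mathrm{Stab}_{S_{\ell}}(\boldsymbol{\lambda})$ is trivial, so that $\boldsymbol{\lambda}\circ g^{i}\ne\boldsymbol{\lambda}$ automatically whenever $g^{i}\ne 1$. Write $\boldsymbol{\lambda}=(\lambda^{1},\dots,\lambda^{\ell})$ with $\lambda^{j}=(\lambda(h_{j}(0)),\lambda(h_{j}(1)),\dots)$, so that $\boldsymbol{\lambda}\circ h=\boldsymbol{\lambda}$ means $\lambda^{h(j)}=\lambda^{j}$ for all $j$. If $h\ne 1$, pick $j$ with $k:=h(j)\ne j$; then $\lambda^{j}=\lambda^{k}$, and a direct check shows the transposition $(j\,k)$ also fixes $\boldsymbol{\lambda}$, contradicting the hypothesis that no $2$--cycle fixes $\boldsymbol{\lambda}$. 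Hence $\mathrm{Stab}_{S_{\ell}}(\boldsymbol{\lambda})=\{1\}$; since $S_{\ell}\subset O(\ell)$, part (1) applies to every $g\in S_{\ell}$ and gives the claim.

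I do not anticipate a substantive obstacle: the proposition is essentially a repackaging of Theorem~\ref{general-whittaker}. The only points requiring a moment of care are (a) verifying that $M(1,\boldsymbol{\lambda})\circ g^{i}$ is still a standard irreducible Whittaker module of the translated type $\Lambda\circ g^{i}$, which rests on $g$ normalizing $\mathfrak{n}$, and (b) the elementary group-theoretic fact in (2) that triviality of $\mathrm{Stab}_{S_{\ell}}(\boldsymbol{\lambda})$ is detected by $2$--cycles alone, which is precisely what upgrades the $2$--cycle hypothesis to the hypothesis of part (1) for arbitrary $g$ and all of its powers.
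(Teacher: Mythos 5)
Your proposal is correct and follows essentially the same route as the paper: part (1) is obtained by applying the general Whittaker irreducibility result (Theorem \ref{general-whittaker}, resting on the criterion of Lemma \ref{general-criterion}) to the twisted modules $M(1,\boldsymbol{\lambda})\circ g^{i}=M(1,\boldsymbol{\lambda}\circ g^{i})$ with pairwise distinct Whittaker functions, and part (2) reduces to (1) via the observation that the $2$--cycle condition forces $\lambda^{i}\ne\lambda^{j}$ for all $i\ne j$, so no nontrivial element of $S_{\ell}$ fixes $\boldsymbol{\lambda}$. Your explicit verification that $g$ normalizes $\mathfrak{n}$ and your contrapositive stabilizer argument are just slightly more detailed phrasings of the same steps.
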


\begin{proof} The proof of assertion (1) follows easily by using
Theorem \ref{general-criterion}. Since for a $2$-cycle $\sigma$,  we
have
\[
\bm{\lambda}\circ\sigma\ne\bm{\lambda},\ \forall\sigma\iff\forall i,j,\quad1\le i<j\le\ell,\quad\lambda_{i}\ne\lambda_{j}
\]
\[
\iff(\lambda^{h(1)},\dots,\lambda^{h(\ell)})\ne(\lambda^{1},\dots,\lambda^{\ell}),\quad\forall h\in S_{\ell}.
\]
Therefore for any $g \in S_{\ell}$ we have $\bm{\lambda}\circ g^i  \ne\bm{\lambda}$.
Now assertion (2) follows directly from (1). \end{proof}

We also have the following  conjecture.


\begin{conjecture} Assume that $\bm{\lambda}\circ\sigma\ne\bm{\lambda}$
for any $2$\textendash cycle $\sigma\in S_{\ell}$. Then $M(1,\boldsymbol{\lambda})$
is an irreducible $M(1)^{S_{\ell}}$\textendash module. \end{conjecture}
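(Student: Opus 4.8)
The plan is to extend the strategy of Theorems \ref{general} and \ref{general-criterion} from the cyclic group $\langle g\rangle$ to $G=S_{\ell}$, using the representation theory of $S_{\ell}$ in place of the roots of unity. First I would record that the hypothesis ``$\boldsymbol{\lambda}\circ\sigma\neq\boldsymbol{\lambda}$ for every $2$-cycle $\sigma$'' is exactly the statement that the components $\lambda^{1},\dots,\lambda^{\ell}$ of $\boldsymbol{\lambda}$ are pairwise distinct, hence that the stabilizer of $\boldsymbol{\lambda}$ in $S_{\ell}$ is trivial; consequently the $\ell!$ translates $\boldsymbol{\lambda}\circ g$, $g\in S_{\ell}$, are pairwise distinct Whittaker functions. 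Since $M(1,\boldsymbol{\lambda})\circ g=M(1,\boldsymbol{\lambda}\circ g)$ and each $M(1,\boldsymbol{\lambda}\circ g)$ is again an irreducible Whittaker module, Lemma \ref{inequivalence =general case}(1) shows that the modules $M(1,\boldsymbol{\lambda}\circ g)$, $g\in S_{\ell}$, are mutually inequivalent irreducible $M(1)$-modules.

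Next I would form $\mathcal{M}=\bigoplus_{g\in S_{\ell}}M(1,\boldsymbol{\lambda})\circ g$ and equip it with the $S_{\ell}$-action $\Psi_{h}$ which maps the $g$-th summand identically onto the $gh^{-1}$-th summand; a short computation gives $\Psi_{h}(v_{n}m)=(hv)_{n}\Psi_{h}(m)$, so the $M(1)$-action on $\mathcal{M}$ is $S_{\ell}$-equivariant. The fixed-point subspace is the diagonal $\Delta(W)=\{(w,\dots,w)\}$, which is an $M(1)^{S_{\ell}}$-submodule isomorphic to $W=M(1,\boldsymbol{\lambda})$ and which coincides with the trivial-isotypic component $\mathcal{M}_{\mathrm{triv}}$ in the decomposition $\mathcal{M}=\bigoplus_{\chi}\mathcal{M}_{\chi}$ into $S_{\ell}$-isotypic components. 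Because the $\ell!$ summands of $\mathcal{M}$ are Whittaker modules with distinct Whittaker functions, Lemma \ref{inequivalence =general case}(2) gives that for every $w\neq0$ the vector $(w,\dots,w)$ is cyclic in $\mathcal{M}$ over $M(1)$.

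With this in hand the argument would conclude as in Theorem \ref{general-criterion}: if $0\neq S\subsetneq W$ were a proper $M(1)^{S_{\ell}}$-submodule, then $\Delta(S)\subsetneq\Delta(W)=\mathcal{M}_{\mathrm{triv}}$, while cyclicity forces $M(1).\Delta(S)=\mathcal{M}$. Decomposing $M(1)=\bigoplus_{\chi}M(1)_{\chi}$ and using $M(1)_{\chi}\cdot\mathcal{M}_{\mathrm{triv}}\subseteq\mathcal{M}_{\chi}$, the trivial-isotypic part of the $S_{\ell}$-stable submodule $M(1).\Delta(S)$ is forced to be all of $\mathcal{M}_{\mathrm{triv}}=\Delta(W)$; but one wants this part to be only the $M(1)^{S_{\ell}}$-submodule generated by $\Delta(S)$, namely $\Delta(S)$ itself, a contradiction.

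The hard part will be this last identification. In the cyclic case it is controlled by the $\Z_{p}$-grading $M(1)^{i}\cdot\mathcal{M}_{j}\subseteq\mathcal{M}_{i+j}$; for $S_{\ell}$ one only has $M(1)_{\chi}\cdot\mathcal{M}_{\psi}\subseteq\bigoplus_{\theta\subseteq\chi\otimes\psi}\mathcal{M}_{\theta}$, and a product $\chi_{1}\otimes\cdots\otimes\chi_{k}$ may contain the trivial representation even when no partial product does, so one cannot immediately read off that the trivial-isotypic part of $M(1).\Delta(S)$ equals $M(1)^{S_{\ell}}.\Delta(S)$. Equivalently, one must show that an $M(1)^{S_{\ell}}$-submodule of $W$ is already stable under all ``trivial-type'' compositions of non-invariant modes. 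I would attack this using Linshaw's invariant-theoretic generators of $M(1)^{S_{\ell}}$ \cite{L} to make such compositions explicit, and, for the small cases $\ell\le4$ where $S_{\ell}$ is solvable, by iterating Theorem \ref{general} along a chief series with prime cyclic quotients, verifying at each stage that the relevant translates of $M(1,\boldsymbol{\lambda})$ remain inequivalent over the successive orbifold subalgebras.
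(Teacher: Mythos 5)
You should first note that the paper itself contains no proof of this statement: it is stated as a conjecture, and the authors explicitly defer it (``the proof of the conjecture requires certain extension of methods used in the paper''). So there is no argument in the paper to compare yours against, and your proposal has to stand on its own. As written, it does not: after correctly setting up the framework --- the hypothesis forces the stabilizer of $\bm{\lambda}$ in $S_{\ell}$ to be trivial, so the $\ell!$ translates $M(1,\bm{\lambda}\circ g)$ are irreducible Whittaker modules with pairwise distinct Whittaker functions, hence pairwise inequivalent, and every diagonal vector $(w,\dots,w)$ in $\mathcal{M}=\bigoplus_{g\in S_{\ell}}M(1,\bm{\lambda})\circ g$ is cyclic by Lemma \ref{1.4} --- you explicitly leave the decisive step (identifying the trivial-isotypic part of $M(1).\Delta(S)$ with $M(1)^{S_\ell}.\Delta(S)$) as ``the hard part,'' offering only speculative attacks. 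Those attacks are themselves problematic: Linshaw's generators are not brought to bear in any concrete way, and the composition-series induction treats only $\ell\le 4$ and would in addition require knowing that the translates remain inequivalent as modules over the successive \emph{intermediate orbifolds}, not merely over $M(1)$, which is a genuinely stronger statement you do not address. So the proposal is an outline with its central step unproven, and that step is precisely where the cyclic-case proof (Lemma \ref{general-criterion}) uses the abelian grading $V^{j}.\Delta^{(p,i)}(W)\subset\Delta^{(p,i+j)}(W)$.

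That said, the obstruction you describe (iterated tensor products $\chi_1\otimes\cdots\otimes\chi_k$ containing the trivial representation) is not where the difficulty actually lies, and the gap appears closable with the tools already in the paper. By \cite[Proposition 4.1]{DM} (equivalently \cite[Proposition 6.1]{Li}, already invoked in Lemma \ref{countably}), for any vector $s$ the single-application span $\mbox{Span}_{\C}\{v_n s \mid v\in M(1),\ n\in\Z\}$ is already a weak submodule, so $\mathcal{M}=\mbox{Span}_{\C}\{v_n s \mid v\in M(1),\ s\in\Delta(S)\}$ and no composition of non-invariant modes ever needs to be analyzed. Moreover, for $m$ in the diagonal (the trivial isotypic component) and $U\subset M(1)$ an irreducible $S_\ell$-submodule of type $\chi$, the map $u\mapsto u_n m$ is $S_\ell$-equivariant, since $\Psi_h(u_n m)=(hu)_n\Psi_h(m)=(hu)_n m$; hence its image lies in $\mathcal{M}_\chi$, i.e. $M(1)_\chi.\Delta(S)\subset\mathcal{M}_\chi$ --- only $\chi\otimes\mbox{triv}=\chi$ enters, never a longer product. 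Projecting $\mathcal{M}=\sum_\chi M(1)_\chi.\Delta(S)$ onto $\mathcal{M}_{\mathrm{triv}}=\Delta(W)$ then gives $\Delta(W)=M(1)^{S_\ell}.\Delta(S)\subset\Delta(S)$, the desired contradiction; this equivariance observation is the non-abelian substitute for the $\Z_p$-grading, and it (or a careful verification of it) is exactly what your write-up is missing. Without it, what you have submitted is a plausible strategy, not a proof.
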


 The proof of the conjecture requires certain extension of methods used in the paper. We plan to study the proof of this conjecture in our forthcoming papers.


\section{Example: Weyl vertex algebra}
\label{Weyl}

The Weyl algebra $\widehat{\mathcal{A}}$ is  the associative algebra with generators $a(n), a^{*}(n), n \in \mathbb{Z}$ and relations
\[[a(n), a^{*} (m)] = \delta_{n+m,0}, \quad [a(n), a(m)] = [a ^{*} (m), a ^* (n) ] = 0, \quad\ n,m\in \mathbb{Z}. \]

Let $M$ be the simple {\it Weyl module} generated by the cyclic vector ${\bf 1}$ such that
$$ a(n) {\bf 1} = a  ^* (n+1) {\bf 1} = 0 \quad (n \ge 0). $$
As a vector space, $$ M \cong {\C}[a(-n), a^*(-m) \ \vert \ n > 0, \ m \ge 0 ]. $$

 There is a unique vertex algebra $(M, Y, {\bf 1})$  (cf. \cite{F,FB,KR}) where
the  vertex operator is given by $$ Y: M \rightarrow \mbox{End}(M) [[z, z ^{-1}]] $$
such that
$$ Y (a(-1) {\bf 1}, z) = a(z), \quad Y(a^* (0) {\bf 1}, z) = a ^* (z),$$
$$ a(z)   = \sum_{n \in {\Z} } a(n) z^{-n-1}, \ \ a^{*}(z) =  \sum_{n \in {\Z} } a^{*}(n)
z^{-n}. $$

We choose the following conformal vector of central charge $c=-1$ (cf. \cite{KR}): $$ \omega = \frac{1}{2} (a(-1) a^{*}(-1) - a(-2) a^{*} (0)) {\bf 1}.$$
Then $(M, Y, {\bf 1}, \omega)$  has the structure of a $\frac{1}{2} {\Z}_{\ge 0}$--graded vertex operator algebra. We can define weak and ordinary modules for  $(M, Y, {\bf 1}, \omega)$ as in the case of ${\Z}$--graded vertex operator algebras.

We define the Whittaker module for $\widehat{\mathcal{A}}$ to be the quotient  \[M_1(\bm{\lambda}, \bm{\mu}) = \sfrac{\widehat{\mathcal{A}}}{I},\]
 where $\bm{\lambda} = (\lambda_0, \dotsc, \lambda_n)$, $\bm{\mu} = (\mu_1, \dotsc, \mu_n)$ and $I$ is the left ideal \[I = \big\langle a(0)-\lambda_0, \dotsc, a(n) - \lambda_n, a^{*}(1)-\mu_1, \dotsc, a^{*}(n) - \mu_n, a(n+1), \dotsc, a^{*}(n+1), \dotsc \big\rangle.\]

\begin{proposition} We have:
\item[(1)]  $M_1(  \bm{ \lambda}, \bm{ \mu}) $ is an irreducible $\widehat{\mathcal{A}}$--module.
\item[(2)] $M_1(  \bm{ \lambda}, \bm{ \mu}) $ is an irreducible weak module for the Weyl vertex operator algebra $M$.
\end{proposition}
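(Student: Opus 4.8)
The plan is to prove both assertions by a Whittaker-module argument entirely internal to the Weyl algebra, then transport irreducibility to the vertex-algebra side using the modes. First I would record that the Weyl algebra $\widehat{\mathcal A}$ is precisely the universal enveloping algebra $U(\mathfrak L)$ of the Lie algebra $\mathfrak L$ spanned by the modes $a(n),a^*(m)$, $n\in\Z$, $m\in\Z$, together with the identity (a Heisenberg-type Lie algebra, with $[a(n),a^*(-n)]=1$). Set $\mathfrak n=\mathrm{Span}_{\C}\{a(k),a^*(k+1)\mid k\ge 0\}$, a nilpotent (in fact abelian modulo the center) subalgebra of $\mathfrak L$, and let $\lambda:\mathfrak n\to\C$ be the Whittaker function $\lambda(a(k))=\lambda_k$ for $0\le k\le n$ and $0$ for $k>n$, $\lambda(a^*(k))=\mu_k$ for $1\le k\le n$ and $0$ for $k>n$. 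Then $M_1(\bm\lambda,\bm\mu)=U(\mathfrak L)\otimes_{U(\mathfrak n)}\C u_\lambda$ is, by construction, the standard (universal) Whittaker $\mathfrak L$-module $M_\lambda$.

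For (1), the key step is to show $M_\lambda$ is already irreducible as an $\widehat{\mathcal A}$-module; equivalently, the left ideal $I$ is maximal. By the PBW theorem, $M_1(\bm\lambda,\bm\mu)\cong\C[a(-m),a^*(-m')\mid m>0,\ m'\ge 0]$ as a vector space, with the generators $a(-m),a^*(-m')$ acting as creation operators and the generators $a(k)-\lambda_k$, $a^*(k)-\mu_k$, $a(k'),a^*(k'+1)$ ($k\ge 0$, $k'$ large) acting as ``annihilation'' operators adjusted by scalars. Take any nonzero $w$ in a submodule $U\subseteq M_1(\bm\lambda,\bm\mu)$; write $w$ in the polynomial basis and let $d$ be the total degree of its top-degree part. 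Applying a suitable product of the shifted annihilation operators $a(k)-\lambda_k$ and $a^*(k)-\mu_k$ (which, after translating by the scalars $\lambda_k,\mu_k$, act as the usual commuting derivations $\partial/\partial a^*(-k)$ and $\partial/\partial a(-k)$ on the polynomial algebra) lowers the degree, and by choosing the multiset of indices matching a top-degree monomial of $w$ we can kill everything except a nonzero constant multiple of $u_\lambda$. Hence $u_\lambda\in U$, and then the creation operators generate all of $M_1(\bm\lambda,\bm\mu)$ from $u_\lambda$, so $U=M_1(\bm\lambda,\bm\mu)$. This is the computational heart of the argument; the bookkeeping with the shifted operators is the main obstacle, though it is a standard Whittaker-module computation once one notes that conjugating by the appropriate exponential (or simply substituting $a(k)\mapsto a(k)+\lambda_k$ etc.) reduces to the classical fact that $\C[x_i]$ is irreducible under the Weyl algebra of the $x_i$ and $\partial_{x_i}$.

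For (2), I would invoke the correspondence between $\widehat{\mathcal A}$-modules and weak $M$-modules. Since the vertex operators $Y(a(-1)\mathbf 1,z)=a(z)$ and $Y(a^*(0)\mathbf 1,z)=a^*(z)$ have as their modes exactly the generators $a(n),a^*(m)$ of $\widehat{\mathcal A}$, and all other fields $Y(v,z)$ on any weak $M$-module are obtained from these two via normally ordered products (because $M$ is strongly generated by $a(-1)\mathbf 1$ and $a^*(0)\mathbf 1$), a weak $M$-module structure on a vector space $N$ is the same datum as an $\widehat{\mathcal A}$-module structure on $N$ satisfying the lower-truncation condition; and an $\widehat{\mathcal A}$-submodule is the same as a weak $M$-submodule. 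The lower-truncation condition holds on $M_1(\bm\lambda,\bm\mu)$ because the defining left ideal $I$ contains $a(k),a^*(k+1)$ for all sufficiently large $k$, so $a(k)w=a^*(k+1)w=0$ for $k\gg 0$ on any fixed $w$; one checks this is inherited by arbitrary modes $v_k$ via the iterate formula. Therefore the $\widehat{\mathcal A}$-irreducibility from (1) immediately gives weak $M$-irreducibility, completing the proof.
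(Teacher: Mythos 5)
Your proof is correct and follows essentially the same route as the paper: establish simplicity of $M_1(\bm{\lambda},\bm{\mu})=\widehat{\mathcal{A}}/I$ as an $\widehat{\mathcal{A}}$--module (equivalently, maximality of the left ideal $I$), and then use that the module is restricted so that $\widehat{\mathcal{A}}$--irreducibility transfers directly to irreducibility as a weak module for the Weyl vertex algebra $M$. The only difference is that you carry out explicitly, via the shifted annihilation operators acting as partial derivatives on the polynomial realization, the maximality check that the paper declares straightforward and delegates to \cite{BO}, \cite{FGM}.
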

\begin{proof}
It is straightforward to check that the ideal $I$ defined above is a maximal left ideal in $\mathcal A$ (see also \cite{BO}, \cite{FGM})  and therefore  the quotient  $M_1(  \bm{ \lambda}, \bm{ \mu}) =\mathcal A / I$ is an simple $\widehat{\mathcal{A}}$--module. Since by construction,  $M_1(  \bm{ \lambda}, \bm{ \mu})$  is a restricted  $\widehat{\mathcal{A}}$--module, it is an irreducible $M$--module.
\end{proof}

Let $w = 1 + I \in M_1(\bm{\lambda}, \bm{\mu})  $. Then $w$ is a cyclic vector and
$$  a(0) w = \lambda_0 w, \dots, a(n) w = \lambda_n w,  a^*(1) w = \mu_1 w, \dots , a ^* (n) w =  \mu_n w   $$
and $a ^* (k) w = a(k) w = 0$ for $k >n$.

Now we want to identify $M_1(\bm{\lambda}, \bm{\mu})$ as a Whittaker module for certain Whittaker pair. Let $\mathfrak L$ be the Lie algebra with generators $a(n), a^{*}(n), K$, $n \in {\Z}$  such that $K$ is central   and
\[[a(n), a^{*} (m)] = \delta_{n+m,0} K, \quad [a(n), a(m)] = [a ^{*} (m), a ^* (n) ] = 0, \quad\ n,m\in \mathbb{Z}. \]
Then $M_1(\lambda, \mu)$ is an irreducible  $\mathfrak L$--module of level $1$ (i.e., $K$ acts as the multiplication with $1$).

Let $\mathfrak n$ be the subalgebra of $\mathfrak L$ generated by $a (n), a^*(n+1)$ for $n \ge 0$. Then $\mathfrak n$ is commutative, and therefore a nilpotent subalgebra of $\mathfrak L$.

Define the  Whittaker function $\Lambda =(\bm{\lambda}, \bm{\mu})  : \mathfrak n \rightarrow {\C}$ by
$$ \Lambda (a(0)) = \lambda_0, \dots, \Lambda(a(n)) = \lambda_n, \Lambda(a(k)) = 0 \quad (k >n), $$
$$ \Lambda (a^* (1)) = \mu_1, \dots, \Lambda(a^* (n)) = \mu_n, \Lambda(a^ * (k)) = 0 \quad (k >n). $$

\begin{proposition}
$M_1(\bm{\lambda}, \bm{\mu})  $ is a standard Whittaker module  of level $1$ for the Whittaker pair  $(\mathfrak L, \mathfrak n)$ with Whittaker function  $\Lambda = (\lambda, \mu)$.
\end{proposition}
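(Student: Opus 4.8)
The plan is to identify $M_1(\bm{\lambda},\bm{\mu})$ directly, as an $\mathfrak{L}$--module, with the level--one quotient of the universal Whittaker module $M_\Lambda=U(\mathfrak L)\otimes_{U(\mathfrak n)}U_\Lambda$; the whole argument is a bookkeeping comparison of two quotients of $U(\mathfrak L)$. First I would record that the Weyl algebra is exactly the level--one quotient of the enveloping algebra: the assignment $a(n)\mapsto\overline{a(n)}$, $a^*(n)\mapsto\overline{a^*(n)}$ extends to an algebra isomorphism $\widehat{\mathcal A}\cong U(\mathfrak L)/(K-1)U(\mathfrak L)$, since the defining relations of $\widehat{\mathcal A}$ are precisely the images of the relations of $\mathfrak L$ after setting $K=1$, and both algebras have matching PBW bases. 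Under this isomorphism an $\widehat{\mathcal A}$--module is the same thing as an $\mathfrak L$--module of level $1$.

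Next I would rewrite the universal Whittaker module as a quotient by a left ideal. Since $U_\Lambda=U(\mathfrak n)/\ker\Lambda$ and $\ker\Lambda$ is the left ideal of $U(\mathfrak n)$ generated by $\{x-\Lambda(x)\mid x\in\mathfrak n\}$, we get $M_\Lambda\cong U(\mathfrak L)/J$ with $J=U(\mathfrak L)\cdot\{x-\Lambda(x)\mid x\in\mathfrak n\}$, and hence the standard Whittaker module of level one is $M_\Lambda^{(1)}:=M_\Lambda/(K-1)M_\Lambda\cong U(\mathfrak L)/(J+U(\mathfrak L)(K-1))$. The only step with any content is then to check that the image of $J+U(\mathfrak L)(K-1)$ under the projection $U(\mathfrak L)\to\widehat{\mathcal A}$ is exactly the ideal $I$ appearing in the definition of $M_1(\bm{\lambda},\bm{\mu})$: since $\mathfrak n$ is spanned by the $a(k)$, $k\ge 0$, and the $a^*(k)$, $k\ge 1$, and $\Lambda$ is linear, the set $\{x-\Lambda(x)\mid x\in\mathfrak n\}$ generates the same left ideal as $\{a(k)-\lambda_k:0\le k\le n\}\cup\{a(k):k>n\}\cup\{a^*(k)-\mu_k:1\le k\le n\}\cup\{a^*(k):k>n\}$, which is precisely the generating set of $I$. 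Therefore $M_1(\bm{\lambda},\bm{\mu})=\widehat{\mathcal A}/I\cong U(\mathfrak L)/(J+U(\mathfrak L)(K-1))=M_\Lambda^{(1)}$, with the cyclic generator $w=1+I$ corresponding to $u_\Lambda$; this is the assertion.

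If one prefers a module-theoretic presentation, the same conclusion follows by defining the $\mathfrak L$--homomorphism $M_\Lambda^{(1)}\to M_1(\bm{\lambda},\bm{\mu})$, $u_\Lambda\mapsto w$, which exists by the universal property of the induced module because the relations displayed just before the Proposition say exactly that $x\,w=\Lambda(x)w$ for $x\in\mathfrak n$ and $Kw=w$; surjectivity is immediate since $w$ is cyclic, and injectivity follows by comparing PBW bases, both modules being free of rank one over $\C[a(-m),a^*(-k)\mid m>0,\ k\ge 0]$ with the map restricting to the identity on this polynomial algebra. I do not expect a genuine obstacle here: the two points that require care are handling the central element correctly — because the Weyl vertex algebra has central charge data forcing $K=1$, one must take the universal Whittaker module \emph{of level one}, not $M_\Lambda$ itself — and matching the explicit generators of $I$ with the Whittaker conditions $x-\Lambda(x)$, $x\in\mathfrak n$; everything else is routine verification.
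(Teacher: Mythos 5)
Your argument is correct. Note that the paper actually states this proposition without any proof --- it is treated as immediate from the construction of $M_1(\bm{\lambda},\bm{\mu})=\widehat{\mathcal{A}}/I$ and the definition of the standard Whittaker module --- so there is no paper proof to compare against; what you supply is precisely the verification the authors leave implicit. Your first, ideal-theoretic argument is complete: $\widehat{\mathcal{A}}\cong U(\mathfrak{L})/(K-1)U(\mathfrak{L})$ follows from the two presentations (one does not even need PBW here, since both algebras are given by generators and relations and the two natural maps are mutually inverse on generators); the induced module $M_\Lambda$ is $U(\mathfrak{L})/J$ with $J$ the left ideal generated by $x-\Lambda(x)$, $x\in\mathfrak{n}$; and the image of $J+U(\mathfrak{L})(K-1)$ in $\widehat{\mathcal{A}}$ is exactly $I$, because $\mathfrak{n}$ has basis $a(k)$, $k\ge 0$, and $a^*(k)$, $k\ge 1$, and $\Lambda$ is linear. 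You also correctly isolate the one non-cosmetic point: in the paper's setup $K\notin\mathfrak{n}$, so ``standard Whittaker module of level $1$'' must be read as the quotient of $M_\Lambda$ by $(K-1)M_\Lambda$, equivalently induction from $\mathfrak{n}\oplus\C K$ with $K\mapsto 1$. The only soft spot is in your alternative, module-theoretic argument: proving injectivity by saying both modules are free of rank one over $\C[a(-m),a^*(-k)\mid m>0,\ k\ge 0]$ presupposes that the normally ordered monomials applied to $w$ are linearly independent in $\widehat{\mathcal{A}}/I$, which is essentially the content being established; since your first argument is self-contained this does not affect correctness, but the second argument should not be presented as an independent proof.
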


Let $\zeta_p = e^{2 \pi i / p}$ be $p$-th root of unity. Let $g_p$ be the automorphism of  the vertex operator  algebra $M$ which is uniquely determined by the following automorphism  of the Weyl algebra  $\widehat{\mathcal{A}}$:
$$ a(n) \mapsto \zeta_p a(n), \quad a^* (n) \mapsto \zeta^{-1} _p a^* (n) \quad (n \in {\Z}). $$
Then $g_p$ is the automorphism of $M$ of order $p$.


\begin{theorem} \label{weyl-irreducible} Assume that $\Lambda=(\bm{\lambda},\bm{\mu})\ne0$.
Then $M_{1}(\bm{\lambda},\bm{\mu})$ is an irreducible weak module for
the orbifold subalgebra $M^{\Z_{p}}=M^{\left\langle g_{p}\right\rangle}$ for each $p\ge1$.
\end{theorem}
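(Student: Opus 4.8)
The plan is to apply Theorem \ref{general-whittaker} (equivalently, Theorem \ref{general-criterion} together with Lemma \ref{inequivalence =general case}). We already know from the preceding propositions that $M_1(\bm{\lambda},\bm{\mu})$ is an irreducible weak $M$--module and that it is a standard Whittaker module of level $1$ for the Whittaker pair $(\mathfrak L,\mathfrak n)$ with Whittaker function $\Lambda=(\bm{\lambda},\bm{\mu})$. So the only thing left to check is the hypothesis of Theorem \ref{general-whittaker}: that the modules $W_i = M_1(\bm{\lambda},\bm{\mu})\circ g_p^{\,i}$, for $i=0,1,\dots,p-1$, are Whittaker modules whose Whittaker functions $\Lambda^{(i)}$ are mutually distinct.

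First I would compute the Whittaker function of $W_i$ explicitly. Since $g_p$ acts on the Weyl algebra by $a(n)\mapsto\zeta_p a(n)$, $a^*(n)\mapsto\zeta_p^{-1}a^*(n)$, the definition $Y_{W\circ g_p}(v,z)=Y_W(g_p v,z)$ translates into the statement that $W_i$ is $M_1(\bm{\lambda},\bm{\mu})$ with the $\mathfrak L$--action precomposed by $g_p^{\,i}$; concretely, on $W_i$ the generator $a(m)$ acts as $\zeta_p^{\,i}a(m)$ does on the original module and $a^*(m)$ acts as $\zeta_p^{-i}a^*(m)$. Hence $W_i$ is again a Whittaker module for $(\mathfrak L,\mathfrak n)$, with Whittaker function
\[
\Lambda^{(i)}(a(k)) = \zeta_p^{\,i}\,\lambda_k,\qquad \Lambda^{(i)}(a^*(k)) = \zeta_p^{-i}\,\mu_k,
\]
for the relevant range of $k$ (and zero otherwise), where here I extend $\bm{\lambda}=(\lambda_0,\dots,\lambda_n)$ and $\bm{\mu}=(\mu_1,\dots,\mu_n)$ by zero. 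In other words $\Lambda^{(i)}$ corresponds to the pair $(\zeta_p^{\,i}\bm{\lambda},\,\zeta_p^{-i}\bm{\mu})$.

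Next I would show these are mutually distinct when $\Lambda\ne 0$. Suppose $\Lambda^{(i)}=\Lambda^{(j)}$ for some $0\le i<j\le p-1$. Then $\zeta_p^{\,i}\lambda_k=\zeta_p^{\,j}\lambda_k$ for all $k$ and $\zeta_p^{-i}\mu_k=\zeta_p^{-j}\mu_k$ for all $k$. Since $\zeta_p^{\,i}\ne\zeta_p^{\,j}$ (and likewise $\zeta_p^{-i}\ne\zeta_p^{-j}$), this forces $\lambda_k=0$ for all $k$ and $\mu_k=0$ for all $k$, i.e.\ $\bm{\lambda}=\bm{\mu}=0$, contradicting $\Lambda\ne 0$. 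Therefore the $\Lambda^{(i)}$ are distinct, and Theorem \ref{general-whittaker} yields that $M_1(\bm{\lambda},\bm{\mu})$ is an irreducible weak $M^{\langle g_p\rangle}$--module; for $p=1$ the statement is just the irreducibility of $M_1(\bm{\lambda},\bm{\mu})$ as an $M$--module, already established.

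I do not anticipate a serious obstacle here: the only mild subtlety is bookkeeping with the two ``halves'' $\bm{\lambda}$ and $\bm{\mu}$ of the Whittaker function and making sure the nilpotent subalgebra $\mathfrak n$ is preserved by $g_p$ (it is, since $g_p$ only rescales the generators $a(k),a^*(k+1)$), so that each $W_i$ genuinely lies in the Whittaker category $\mathcal{W}h(\mathfrak L,\mathfrak n)$. The essential point that makes everything work is that a single nonzero value among the $\lambda_k,\mu_k$ is enough to separate all $p$ twisted functions, because multiplication by the distinct roots of unity $\zeta_p^{\,i}$ acts freely on any nonzero scalar.
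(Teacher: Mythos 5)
Your proposal is correct and follows essentially the same route as the paper: identify $M_{1}(\bm{\lambda},\bm{\mu})\circ g_{p}^{\,i}$ with $M_{1}(\zeta_{p}^{\,i}\bm{\lambda},\zeta_{p}^{-i}\bm{\mu})$, observe that the resulting Whittaker functions are mutually distinct precisely because $\Lambda\ne0$, and invoke Theorem \ref{general-whittaker}. You merely spell out the distinctness check and the $p=1$ case more explicitly than the paper does.
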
 \begin{proof} First we notice that $M_{1}(\bm{\lambda},\bm{\mu})\circ g^{i}=M_{1}(\zeta_{p}^{i}\bm{\lambda},\zeta_{p}^{-i}\bm{\mu})$
and therefore modules $M_{1}(\bm{\lambda},\bm{\mu})\circ g^{i}$ have
different Whittaker functions for $i=0,\dots,p-1$. Now assertion
follows from Theorem \ref{general-whittaker}. \end{proof}

\subsection{An application to affine VOA}

Let $\mathfrak{g}$ be a simple Lie algebra and let $V^{k}(\mathfrak{g})$ be its universal affine vertex algebra of level $k$. Let $V_{k}(\mathfrak{g})$ be its simple quotient.
The following result is well-known:
\begin{lemma} \label{ired-11}
If $W$ is an irreducible weak  $V_{k}(\mathfrak{g})$-module, then $M$ is an irreducible module for the affine Lie algebra  $\hat {\mathfrak{g}}$-module of level $k$.
\end{lemma}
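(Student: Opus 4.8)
The plan is to reduce the claim to the single structural fact that the affine vertex algebra $V_{k}(\g)$ is strongly generated by the subspace $\{a(-1)\vak \mid a\in\g\}$, so that the action of $V_{k}(\g)$ on any weak module is completely controlled by the modes of the generating fields $a(z)=Y(a(-1)\vak,z)=\sum_{n\in\Z}a(n)z^{-n-1}$. In this sense the lemma is the affine-specific version of the principle behind Lemma \ref{lie-verteks}, with $\hg$ playing the role of $\mathfrak g(V)$.

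First I would record that on any weak $V_{k}(\g)$--module $W$ the operators $a(n):=(a(-1)\vak)_{n}$ (for $a\in\g$, $n\in\Z$) make $W$ into a restricted $\hg$--module of level $k$. Indeed, applying the Borcherds commutator formula to $u=a(-1)\vak$ and $v=b(-1)\vak$, and using that in $V^{k}(\g)$ one has $u_{0}v=[a,b](-1)\vak$, $u_{1}v=\langle a,b\rangle k\,\vak$ and $u_{i}v=0$ for $i\ge2$, gives
\[
[a(m),b(n)]=[a,b](m+n)+m\,\delta_{m+n,0}\,\langle a,b\rangle\,k;
\]
the lower truncation condition $a(n)w=0$ for $n\gg0$ yields restrictedness. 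Thus there is an algebra homomorphism $\pi:U(\hg)\to\mbox{End}(W)$.

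Next I would show that $\pi(U(\hg))$ equals the associative subalgebra $\mathcal A_{W}\subset\mbox{End}(W)$ generated by all modes $u_{n}$, $u\in V_{k}(\g)$, $n\in\Z$. One inclusion is clear. For the reverse, since $V_{k}(\g)$ is strongly generated by $\g(-1)\vak$, every $u\in V_{k}(\g)$ is a linear combination of vectors $a^{(1)}(-n_{1})\cdots a^{(r)}(-n_{r})\vak$ with $a^{(i)}\in\g$ and $n_{i}>0$; by the iterate (associativity) formula, every mode of $Y(x_{-j}y,z)$ is a locally finite sum of products of modes of $Y(x,z)$ and $Y(y,z)$, so induction on $r$ shows $u_{n}\in\pi(U(\hg))$ for all $n$.

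Finally, for any $0\ne w\in W$ the weak $V_{k}(\g)$--submodule generated by $w$ is $\mbox{Span}_{\C}\{u_{n}w\mid u\in V_{k}(\g),\ n\in\Z\}$ by \cite[Proposition 6.1]{Li} (as used in Lemma \ref{countably}), and by the previous step this equals $U(\hg)\cdot w$. Since $W$ is irreducible as a weak $V_{k}(\g)$--module, this submodule is all of $W$, so $U(\hg)\cdot w=W$ for every nonzero $w$; that is, $W$ is an irreducible $\hg$--module of level $k$. The only point requiring genuine care is the second step — upgrading strong generation of the vertex algebra to generation of the mode algebra — but this is standard and follows from the associativity formula together with lower truncation; everything else is formal.
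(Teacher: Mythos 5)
Your argument is essentially correct, and it fills in what the paper deliberately leaves out: the paper states Lemma \ref{ired-11} as ``well-known'' and gives no proof at all, so there is no in-paper argument to compare against beyond the general philosophy of Lemma \ref{lie-verteks} (passing from weak modules to modules over a mode Lie algebra), which is exactly the philosophy you follow, specialized to $\hat{\mathfrak{g}}$ in place of $\mathfrak{g}(V)$. The three steps you give --- the Borcherds commutator formula turning a weak $V_{k}(\mathfrak{g})$--module into a restricted level-$k$ $\hat{\mathfrak{g}}$--module, strong generation of $V_{k}(\mathfrak{g})$ by $\mathfrak{g}(-1)\mathbf{1}$, and the identification of the cyclic submodule $\mathrm{Span}_{\C}\{u_{n}w\}$ via \cite[Proposition 6.1]{Li} --- constitute the standard folklore proof, and the logic closes correctly. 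One intermediate claim should be stated more carefully: the iterate formula expresses a mode $(x_{-j}y)_{n}$ only as an infinite (though locally finite) sum of products of modes of $x$ and $y$, so the operator $(x_{-j}y)_{n}$ need not literally lie in the image $\pi(U(\hat{\mathfrak{g}}))\subset\mathrm{End}(W)$, and the asserted equality $\pi(U(\hat{\mathfrak{g}}))=\mathcal{A}_{W}$ of subalgebras of $\mathrm{End}(W)$ is stronger than what your induction establishes. What the induction does give, and what your final step actually uses, is the vector-wise statement that $u_{n}w\in U(\hat{\mathfrak{g}})\cdot w$ for every $u\in V_{k}(\mathfrak{g})$, $n\in\Z$, $w\in W$; with the intermediate step rephrased in that form, the proof is complete: irreducibility as a weak module gives $\mathrm{Span}_{\C}\{u_{n}w\}=W$, hence $U(\hat{\mathfrak{g}})\cdot w=W$ for every $w\neq0$.
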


Next we show how the Theorem \ref{weyl-irreducible} gives a construction of new irreducible modules for affine Lie algebra $\widehat{sl(2)}$ associated to $sl(2)$.
In the case $p=2$, ${\Z}_2$--orbifold $M^{\Z_2}$  is isomorphic to a simple affine VOA $V_{-1/2}(sl(2))$   (cf. \cite{FF} and also \cite[Section 6]{AMPP}) associated to affine Lie algebra $\widehat{sl(2)}$ at level $-1/2$. The previous theorem gives a realization of large family irreducible modules for VOA $V_{-1/2}(sl(2))$.

\begin{corollary}
Assume that $\Lambda = (\bm{\lambda}, \bm{\mu})  \ne 0$. Then $M_1(\bm{\lambda}, \bm{\mu})$ is an irreducible module for the affine Lie algebra  $\widehat{sl(2)}$  at the level $k = -\frac{1}{2}$.
\end{corollary}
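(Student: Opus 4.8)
The corollary is really a translation of Theorem \ref{weyl-irreducible} through two already-stated facts. First I would recall the isomorphism $M^{\Z_2}\cong V_{-1/2}(sl(2))$ of vertex operator algebras, which is cited from \cite{FF} and \cite[Section 6]{AMPP}; this is the only external input. Given this, Theorem \ref{weyl-irreducible} with $p=2$ says precisely that $M_1(\bm{\lambda},\bm{\mu})$ is an irreducible weak module for the vertex operator algebra $V_{-1/2}(sl(2))$ whenever $\Lambda=(\bm{\lambda},\bm{\mu})\neq 0$. So the content of the proof is to pass from ``irreducible weak $V_{-1/2}(sl(2))$-module'' to ``irreducible $\widehat{sl(2)}$-module at level $-1/2$,'' which is exactly Lemma \ref{ired-11}.

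So the proof I would write is short: \emph{Proof.} By \cite{FF} (see also \cite[Section 6]{AMPP}), the orbifold vertex operator algebra $M^{\Z_2}=M^{\langle g_2\rangle}$ is isomorphic to the simple affine vertex operator algebra $V_{-1/2}(sl(2))$. By Theorem \ref{weyl-irreducible} applied with $p=2$, since $\Lambda=(\bm{\lambda},\bm{\mu})\neq 0$, the module $M_1(\bm{\lambda},\bm{\mu})$ is an irreducible weak $M^{\Z_2}$-module, hence an irreducible weak $V_{-1/2}(sl(2))$-module under the above identification. Finally, Lemma \ref{ired-11} shows that any irreducible weak $V_k(\g)$-module is an irreducible module for $\hat{\g}$ at level $k$; taking $\g=sl(2)$ and $k=-1/2$ gives that $M_1(\bm{\lambda},\bm{\mu})$ is an irreducible $\widehat{sl(2)}$-module at level $-1/2$. $\qed$

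**Where the real work sits.** In this write-up there is essentially no obstacle, because the hard analytic step — the cyclicity/irreducibility over the orbifold — has already been done in Theorem \ref{weyl-irreducible} via Theorem \ref{general-whittaker}, and the affine-algebra comparison is Lemma \ref{ired-11}. The one point deserving a sentence of care is that the isomorphism $M^{\Z_2}\cong V_{-1/2}(sl(2))$ must be an isomorphism of vertex operator algebras (matching conformal vectors), so that ``weak module'' has the same meaning on both sides; I would note that the conformal vector $\omega=\tfrac12(a(-1)a^*(-1)-a(-2)a^*(0))\mathbf 1$ of $M$ is $\Z_2$-invariant and corresponds under the $\beta\gamma$-to-$\widehat{sl(2)}$ (Friedan--Martinec--Shenker / Wakimoto-type) identification to the Sugawara vector at level $-1/2$, which is the content of the cited references. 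The other thing worth making explicit is that $g_2$ acts on $M$ as $a(n)\mapsto -a(n)$, $a^*(n)\mapsto -a^*(n)$, so $M^{\langle g_2\rangle}$ is exactly the even part of the $\beta\gamma$-system, which is the lattice-free realization of $V_{-1/2}(sl(2))$ used in \cite{AMPP}.

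**Remark on generality.** If one wanted a slightly sharper statement, one could also record which $\widehat{sl(2)}$-modules arise: the elements of $\widehat{sl(2)}$ are realized inside the Weyl algebra $\widehat{\mathcal A}$ as quadratic expressions in the $a(n),a^*(n)$ (namely $e=a$-type, $h=:aa^*:$-type, $f=:aa^*a^*:$-type modes), so the Whittaker datum $\Lambda=(\bm\lambda,\bm\mu)$ determines the eigenvalues/nilpotent action of the corresponding $\widehat{sl(2)}$ Whittaker function on the generating vector $w=1+I$. But for the corollary as stated, the three-line argument above suffices, and I would not belabor this.
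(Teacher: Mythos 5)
Your argument is exactly the paper's proof: invoke the isomorphism $M^{\Z_2}\cong V_{-1/2}(sl(2))$ from \cite{FF} and \cite{AMPP}, apply Theorem \ref{weyl-irreducible} with $p=2$, and then use Lemma \ref{ired-11} to pass from irreducible weak $V_{-1/2}(sl(2))$-module to irreducible $\widehat{sl(2)}$-module at level $-\frac{1}{2}$. Your additional remarks on matching conformal vectors and the explicit action of $g_2$ are correct but not needed for the corollary as stated.
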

\begin{proof}
For $p = 2$, $M^{\mathbb{Z}_2}$ is isomorphic to the affine VOA $V_{-\frac{1}{2}}(sl(2))$. Therefore, module $M_1(\bm{\lambda}, \bm{\mu})$ is irreducible for $V_{-\frac{1}{2}}(sl(2))$. Now Lemma \ref{ired-11} implies that $M_1(\bm{\lambda}, \bm{\mu})$ is an irreducible module for affine Lie algebra $\widehat{sl(2)}$.
\end{proof}

\begin{remark}
 The irreducible   weight modules for the Weyl vertex algebra were analysed  in \cite{AdP-2019}. One can easily show that weight modules, denoted by $\widetilde{U(\lambda)}$, have the property $\widetilde{U(\lambda)}\circ g_p \cong \widetilde{U(\lambda)}$.  Then Theorem   \ref{reducibility} implies that they  are direct sum of two irreducible relaxed weight modules for the affine vertex algebra $V_{-\frac{1}{2}}(sl(2))$ (see also \cite{A-2019}, \cite{KR-19}). 
\end{remark}



\vskip10pt {\footnotesize{}{ }\textbf{\footnotesize{}D.A.}{\footnotesize{}:
Department of Mathematics, University of Zagreb, Bijeni\v{c}ka 30,
10 000 Zagreb, Croatia; }\texttt{\footnotesize{}adamovic@math.hr}{\footnotesize \par}

\textbf{\footnotesize{}C. L.}{\footnotesize{}: Institute of Mathematics,
Academia Sinica, Taipei 10617, Taiwan; }\texttt{\footnotesize{}chlam@math.sinica.edu.tw}{\footnotesize \par}

\textbf{\footnotesize{}V.P.}{\footnotesize{}: Department of Mathematics,
University of Zagreb, Bijeni\v{c}ka 30, 10 000 Zagreb, Croatia; }\texttt{\footnotesize{}vpedic@math.hr}{\footnotesize \par}

\textbf{\footnotesize{}N.Y.}{\footnotesize{} School of Mathematical
Sciences, Xiamen University, Fujian, 361005, China;} \texttt{\footnotesize{}
ninayu@xmu.edu.cn}{\footnotesize \par}

{\footnotesize{}}}{\footnotesize \par}

\end{document}